\newcommand{\FigureKStar}{Figure~\ref{Fig:kstar}}
\newcommand{\FigureFreeArc}{Figure~\ref{Fig:freeArc}}
 \def\@seccntformat#1{\csname the#1\endcsname.\quad}
\theoremstyle{plain}
\newtheorem{theorem}{Theorem}
\newtheorem{proposition}[theorem]{Proposition}
\newtheorem{corollary}[theorem]{Corollary}
\newtheorem{lemma}[theorem]{Lemma}
\newtheorem*{theoremMain}{Main Theorem}
\newcommand{\theoremMainRef}{Main~Theorem}
\newtheorem*{conjecture}{Conjecture}
\newtheorem*{question}{Question}
\theoremstyle{definition}
\newtheorem{definition}[theorem]{Definition}
\theoremstyle{remark}
\newtheorem{remark}[theorem]{Remark}
\theoremstyle{remark}
\newtheorem{example}[theorem]{Example}
\numberwithin{equation}{section}
\newcommand{\mL}{L\kern-0.08cm\char39}
\newcommand{\Lip}{\operatorname{Lip}}
\newcommand{\Cut}{\operatorname{Cut}}   
\newcommand{\NNN}{\mathbb N}
\newcommand{\RRR}{\mathbb R}
\newcommand{\SSS}{\mathbb S}
\newcommand{\AAA}{\mathbb{A}}
\newcommand{\III}{I}
\newcommand{\AAa}{\mathcal{A}}
\newcommand{\BBb}{\mathcal{B}}
\newcommand{\CCc}{\mathcal{C}}
\newcommand{\DDd}{\mathcal{D}}
\newcommand{\HHh}{\mathcal{H}}
\newcommand{\dist}{\operatorname{dist}}
\newcommand{\closure}[1]{\overline{#1}}
\newcommand{\boundary}[1]{\partial {#1}}
\newcommand{\interior}[1]{\operatorname{int}(#1)}
\newcommand{\eps}{\varepsilon}
\newcommand{\abs}[1]{\lvert#1\rvert}
\newcommand{\card}[1]{\##1}
\newcommand{\invlim}{\varprojlim}
\newcommand{\dimUpper}{\overline{\dim}}
\newcommand{\dimLower}{\underline{\dim}}
\newcommand{\toPar}[1]{\stackrel{#1}{\rightarrow}}
\newcommand{\length}[1]{\HHh^1(#1)}
\newcommand{\lengthd}[2]{\HHh^1_{#1}(#2)}
\newcommand{\IT}{I^{\operatorname{T}}}
\newcommand{\ID}{I^{\operatorname{D}}}
\newcommand{\IED}{I^{\operatorname{ED}}}
\begin{document}

\title[Entropy and exact Devaney chaos on totally regular continua]
 {Entropy and exact Devaney chaos \\on totally regular continua}

\author[Vladim\'\i r \v Spitalsk\'y]{Vladim\'\i r \v Spitalsk\'y}
\address{Department of Mathematics, Faculty of Natural Sciences,
          Matej Bel University, Tajovsk\'eho 40, 974 01 Bansk\'a Bystrica,
          Slovakia}
\email{vladimir.spitalsky@umb.sk}
\thanks{The author wishes to express his thanks
to \mL{}u\-bo\-m\'ir Snoha for his help with the preparation of the paper.
The author was supported by the Slovak Research and Development Agency
under the contract No.~APVV-0134-10 and by the Slovak Grant Agency under the grant
number VEGA~1/0978/11.}

\subjclass[2010]{Primary 37B05, 37B20, 37B40; Secondary 54H20}

\keywords{Exact Devaney chaos, topological entropy, Lipschitz map,
totally regular continuum, continuum of finite length, rectifiable
curve.}

\begin{abstract}
We study topological entropy of exactly Devaney chaotic maps on
totally regular continua, i.e.~on (topologically) rectifiable
curves.
After introducing the so-called $P$-Lipschitz
maps (where $P$ is a finite invariant set) 
we give an upper bound for their topological entropy.
We prove that if a non-degenerate totally regular
continuum $X$ contains a free arc which
does not disconnect $X$ or if $X$ contains arbitrarily large
generalized stars then $X$ admits an exactly Devaney chaotic map
with arbitrarily small entropy. A possible application
for further study of the best lower bounds
of topological entropies of transitive/Devaney chaotic maps
is indicated.
\end{abstract}

\maketitle

\thispagestyle{empty}

\section{Introduction}\label{S:intro}

A \emph{(discrete) dynamical system} is a pair $(X,f)$ where $X$ is
a compact metric space and $f:X\to X$ is a continuous map. For
$n\in\NNN$ we denote the composition $f\circ f\circ \dots\circ f$
($n$-times) by $f^n$. A point $x\in X$ is a \emph{periodic point} of
$f$ if $f^n(x)=x$ for some $n\in\NNN$. We say that a dynamical
system $(X,f)$ is \emph{(topologically) transitive} if for every
nonempty open sets $U,V\subseteq X$ there is $n\in\NNN$ such that
$f^n(U)\cap V\ne\emptyset$. If every iterate $f^n$ ($n\ge 1$) is
transitive we say that $f$ is \emph{totally transitive}. If
$(X\times X, f\times f)$ is transitive then $(X,f)$ is called
\emph{(topologically) weakly mixing}. A system $(X,f)$ is
\emph{(topologically) mixing} provided for every nonempty open sets
$U,V\subseteq X$ there is $n_0\in\NNN$ such that $f^n(U)\cap
V\ne\emptyset$ for every $n\ge n_0$. Further, $(X,f)$ is
\emph{(topologically) exact} or \emph{locally eventually onto} if
for every nonempty open subset $U$ of $X$ there is $n\in\NNN$ such
that $f^n(U)=X$. Finally, a system $(X,f)$ is \emph{Devaney chaotic}
(\emph{totally Devaney chaotic}, \emph{exactly Devaney chaotic})
provided $X$ is infinite, $f$ is transitive (totally transitive,
exact, respectively) and has dense set of periodic points.

One of the main quantitative characteristics of a dynamical system
$(X,f)$ is the \emph{topological entropy}, denoted by $h(f)$. A
challenging topic in discrete dynamics is the study of relationships
between entropy and qualitative properties of dynamical systems,
such as transitivity, exactness and (exact) Devaney chaos. This goes
back to 1982 when Blokh in \cite{Blo82} showed that every transitive
map on the unit interval $[0,1]$ has entropy at least $(1/2) \log
2$ and that this is the best possible bound, i.e.~$\IT([0,1])=(1/2) \log 2$, where
$$
 \IT(X)=\inf \{h(f):\quad f:X\to X \text{ is transitive}\}.
$$
Moreover, this infimum is in fact the minimum, i.e.~the infimum is
attainable.

Instead of $\IT(X)$ one can study infima/minima of entropies of more
restrictive classes of dynamical systems on $X$. Denote by $\ID(X)$
and $\IED(X)$ the infima of entropies of Devaney chaotic and exactly
Devaney chaotic systems on $X$, respectively. We see at once that
$\IT(X)\le \ID(X)\le \IED(X)$. For (connected) \emph{graphs}, these infima and
the existence of corresponding minima were studied e.g.~in
\cite{Blo87, AKLS, ABLM, ARR, ALM, Ye, Bal01, Ruette, KM, HKO11};
see Table~\ref{tab:ResultsOnGraphs} for some of the results. (For the definition
of classes $\mathcal{P}(i)$ of trees see \cite{Ye}.)
Every transitive graph map which is not conjugate to an
irrational rotation of the circle has dense periodic points
\cite{Blo84}, so $\ID(X)=\IT(X)$ whenever $X$ is a graph. Notice also
that any exact map on a non-degenerate space $X$ has positive
entropy, see e.g.~\cite{KM}; hence the infimum is never attained if
$\IED(X)=0$.

\begin{table}[ht!]
    \begin{tabular}
       {!{\vrule width 1pt}l!{\vrule width 1pt}c|c!{\vrule width 1pt}c|c!{\vrule width 1pt}}
    \noalign{\hrule height 1pt}
     The space $X$
     & $\IT(X)$ / $\ID(X)$ & Attainable?
     & $\IED(X)$ & Att.?
    \\
    \noalign{\hrule height 1pt}
     interval
     & $\frac 12\log 2$ & yes
     & $\frac 12\log 2$ & no
    \\
    \hline
     circle
     & $0$ & yes/no
     & $0$ & no
    \\
    \hline
     $n$-star
     & $\frac 1n \log 2$ & yes
     & $\frac 1n \log 2$ & ?
    \\
    \hline
     tree with $n$ ends
     & $\ge \frac 1n \log 2$ & depends on $X$
     & $\ge \frac 1n \log 2$ & ?
    \\
    \hline
     tree $\in\mathcal{P}(i)$, $n$ ends
     & $\le \frac 1{n-i} \log 2$ & depends on $X$
     & $\le \frac 1{n-i} \log 2$ & ?
    \\
    \hline
     graph $\ne$ tree, circle
     & $0$ & no
     & $0$ & no
    \\
    \noalign{\hrule height 1pt}
    \end{tabular}
\medskip
   \caption{Infima of entropies on graphs}
   \label{tab:ResultsOnGraphs}
\end{table}

In the present paper we study $\IED(X)$ for {totally regular
continua} $X$. Recall that a \emph{continuum} is a connected compact
metric space. A continuum
$X$ is \emph{totally regular} \cite{Nik}
if for every point $x\in X$ and every countable set $P\subseteq X$
there is a basis of neighborhoods of $x$ with finite boundary not
intersecting $P$. Equivalently
one can define totally regular continua as those continua $X$ which
admit a compatible metric $d$ such that $(X,d)$ is a
\emph{rectifiable curve} (i.e.~$(X,d)$ has finite one-dimensional
Hausdorff measure $\lengthd{d}{X}$). Every
\emph{graph} and even every \emph{local dendrite} (a locally
connected continuum containing only finitely many simple closed
curves) is totally regular. As another example one can mention
the Hawaiian earrings (an infinite wedge of circles), which is a totally
regular continuum but not a local dendrite.

Recall that an \emph{arc} is a homeomorphic image of $[0,1]$. A
\emph{free arc} in $X$ is an arc $A\subseteq X$ such that every
non-end point of $A$ is an interior point of $A$ (in the topology of
$X$). We say that a free arc $A\subseteq X$ \emph{does not
disconnect} $X$ if no non-end point of $A$ disconnects $X$; for locally
connected continua (hence also for totally regular ones) this is
equivalent to the fact that $A$ is a subset of a simple closed curve
in $X$, see e.g.~\cite[IV.9.3]{Why42}. We say that $X$
\emph{contains arbitrarily large generalized stars} if for any
$k\in\NNN$ there are a point $a\in X$ and $k$ components of
$X\setminus\{a\}$ such that the closures of the components are
homeomorphic relative to $a$ (i.e.~the corresponding homeomorphisms
fix the point $a$).

In \cite{SpA} we introduced the so-called length-expanding Lipschitz maps
(briefly LEL-maps) and we showed
that for every two non-degenerate totally regular continua $X,Y$
there exists a LEL-map $f:X\to Y$;
see Section~\ref{SS:tentLike} for details.
LEL-maps form ``building blocks'' in our constructions of exactly Devaney
chaotic systems with small entropy. Our main results are summarized in the following theorem.

\newcommand{\theoremMainText}{
 For every non-degenerate totally regular continuum $X$ it holds that $\IED(X) < \infty$.
 Moreover, if $X$ contains arbitrarily large generalized stars or
 $X$ contains a free arc which does not disconnect $X$, then
 $X$ admits
\begin{itemize}
 \item a Devaney chaotic map which is not totally Devaney chaotic, and
 \item an exactly Devaney chaotic map,
\end{itemize}
both with arbitrarily small positive entropy;
hence $\IED(X)=0$.
}

\begin{theoremMain}
\theoremMainText{}
\end{theoremMain}

Thus, in particular, $\IED(X)$ and hence also $\ID(X)$ and $\IT(X)$
are zero for many dendrites,
e.g.~for the $\omega$-star or the Wazewski's universal dendrite.
Still, the following general question is open.

\begin{question}[Baldwin \cite{Bal01}]
Is it true that $\IT(X)=0$ for every dendrite $X$ which is not a
tree?
\end{question}

Our result gives $\IED(X)=0$ also for many spaces $X$ which are not
dendrites, say for the Hawaiian earrings.
Since every graph which is not a tree contains a free
non-disconnecting arc, Main Theorem is a generalization of
\cite[Theorems~3.7 and 4.1]{ARR}, cf.~the last row of
Table~\ref{tab:ResultsOnGraphs}. Let us note that $\IED(X)=0$ is
true also for other subclasses of the class of totally regular
continua, see e.g.~Lemma~\ref{T:I(free-arc)} and
Remark~\ref{R:generalizations}. We even believe that the following
conjecture is true.

\begin{conjecture} $\IED(X)=0$ for every totally regular continuum $X$ which is
not a tree.
\end{conjecture}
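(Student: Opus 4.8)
\emph{Overview.} The plan is to treat the two assertions separately but with a common engine, the length-expanding Lipschitz maps (LEL-maps) of \cite{SpA}. For the finiteness claim I would take a LEL self-map $f\colon X\to X$, which exists because $X$ is a non-degenerate totally regular continuum. Its length-expanding property makes $(X,f)$ exactly Devaney chaotic, while its being Lipschitz on a continuum of finite length makes it a $P$-Lipschitz map for a suitable finite invariant set $P$; the upper bound for the entropy of $P$-Lipschitz maps then gives $h(f)<\infty$, so that $\IED(X)\le h(f)<\infty$. That $h(f)>0$ is automatic for exact maps on non-degenerate spaces \cite{KM}.

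\emph{The dilution mechanism.} For the equality $\IED(X)=0$ the idea is to realize exact Devaney chaos in which all of the ``spreading'' is concentrated into a single step of a long cycle, so that the per-iterate entropy can be forced below any prescribed $\eps>0$. Suppose first that $X$ contains arbitrarily large generalized stars. Fixing $k$, let $a\in X$ and let $K_1,\dots,K_k$ be the closures of $k$ components of $X\setminus\{a\}$ that are homeomorphic relative to $a$; write $Y=K_1\cup\dots\cup K_k$. I would let $f$ map each $K_i$ onto $K_{i+1}$ for $i<k$ by a length-preserving homeomorphism (for a suitable compatible metric realizing the finite length, the relative homeomorphisms may be taken isometric), map $K_k$ onto all of $X$ by a LEL-map $g\colon K_k\to X$, and route $X\setminus Y$ back into the cycle. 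Since the first $k-1$ steps are isometries, the iterate $f^k$ is, up to an isometric relabelling, a single expanding map, whence $h(f)=\tfrac1k\,h(f^k)\le \tfrac1k\,C$ with $C$ the finite $P$-Lipschitz entropy bound of the expanding step; letting $k\to\infty$ drives the entropy to $0$. When instead $X$ contains a free non-disconnecting arc $A$, the arc lies in a simple closed curve $S\subseteq X$ by \cite[IV.9.3]{Why42}, and the same dilution is obtained by using $S$ as rotation room: partition $S$ into $N$ subarcs, rotate them cyclically, and perform the LEL-spreading onto $X$ on one subarc per period, so that again $h(f)=\tfrac1N\,h(f^N)\to 0$; this case should also follow from Lemma~\ref{T:I(free-arc)}.

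\emph{Exactness, periodic points, and the two required maps.} Exactness of the global map follows because once per period the dynamics is spread onto all of $X$ by $g$ (which is onto): for any nonempty open $U$ some forward image $f^n(U)$ first reaches the active piece and then covers $X$. Dense periodic points come from those of the expanding LEL-step, transported around the cycle by the transit isometries and pushed into $X\setminus Y$ by exactness. This yields the exactly Devaney chaotic map of arbitrarily small positive entropy. For a Devaney chaotic map that is \emph{not} totally Devaney chaotic I would instead build in a genuine cyclic spectrum: arrange a decomposition of $X$ into $m\ge 2$ pieces permuted cyclically by $f$, with the expansion respecting this permutation, so that $f$ is transitive with dense periodic points while $f^m$ leaves each piece invariant and is therefore not transitive; the same dilution keeps its entropy arbitrarily small and positive.

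\emph{Main obstacle.} The delicate point is to secure genuine exactness on \emph{all} of $X$ — including its topologically complicated parts, such as the accumulation structure of the Hawaiian earrings or the dense branching of a universal dendrite — while simultaneously keeping $h(f)<\eps$. These demands pull against each other, since exactness forces total spreading, which is naively expensive in entropy. The resolution is precisely to confine all spreading to one LEL-step per long cycle and to bound $h(f)=\tfrac1k\,h(f^k)$ through the $P$-Lipschitz estimate; the technical heart is to verify, uniformly over the whole class of totally regular continua in the metric realizing finite length, that this single LEL-step together with the (entropy-free) isometric transit genuinely produces exactness and that the routing on $X\setminus Y$ introduces no additional entropy.
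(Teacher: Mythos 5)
There is a genuine gap, and it is structural rather than technical: the statement you are proving is the paper's \emph{Conjecture}, which the paper itself leaves open. What the paper actually proves (its Main Theorem) is that $\IED(X)=0$ under an \emph{additional} hypothesis: $X$ contains arbitrarily large generalized stars, or $X$ contains a free arc that does not disconnect $X$. Your proposal reproduces precisely these two cases --- the dilution mechanism with a long cycle of isometric transit pieces and one LEL spreading step, with entropy controlled by the $P$-Lipschitz bound of Proposition~\ref{P:entropyOfPLipschitz} --- and this is indeed essentially the paper's argument for its Main Theorem (Lemmas~\ref{T:I(omega-star)} and~\ref{T:I(free-arc)}). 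But the conjecture quantifies over \emph{every} totally regular continuum that is not a tree, and your case analysis never touches the continua lying outside those two classes; nothing in your sketch explains where the ``rotation room'' (a $k$-fold symmetric star or a circle) would come from in general.

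The gap is not hypothetical. In a dendrite every non-end point is a cut point, so a free arc in a dendrite always disconnects it; hence for dendrites your second case is vacuous. Now take the dendrite obtained from $[0,1]$ by attaching at each point $1/n$, $n\in\NNN$, an arc of length $2^{-n}$. This space has finite length, so it is totally regular; it has infinitely many branch points, so it is not a tree; every point has order at most $3$, so it contains no generalized $k$-star for any $k\ge 4$ (a generalized $k$-star at $a$ requires $k$ components of $X\setminus\{a\}$); and, being a dendrite, it contains no non-disconnecting free arc. Neither branch of your argument (nor the paper's Main Theorem, nor the extension in Remark~\ref{R:generalizations}) applies to it. So your proposal, while a fair blind reconstruction of the proof of the Main Theorem, does not prove the conjecture; as far as the paper is concerned, that statement has no proof at all, and any genuine proof would have to produce small-entropy exactly Devaney chaotic maps without either of the two structural features your dilution construction depends on.
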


A potential application of our results lies in the fact that the
estimates of $\IT(X)$, $\ID(X)$ and $\IED(X)$ for one-dimensional
continua can be used to estimate the corresponding quantities for
some higher-dimensional continua. To do that, one needs two tools. The
first one is an appropriate extension theorem. For instance, in
\cite{AKLS} and \cite{KM11} it was proved that a transitive map
$f:X\to X$ on a compact metric space $X$ without isolated points can
be extended to a transitive map $F$ on $X\times [0,1]$ without
increasing the entropy. An analogous result is true for Devaney
chaotic maps \cite{BS}. The second tool are procedures enabling to
go from $X\times[0,1]$ to the cone or the suspension over $X$, or
to the space $X\times\SSS^1$, similarly as in \cite{BS}. However,
it is not the topic of the present paper to continue in this direction.

Finally let us notice that the study of infima of entropies of
transitive/Devaney chaotic maps on a given space is naturally
connected with the question whether the space admits a \emph{zero entropy}
transitive/Devaney chaotic map. Using techniques developed in
\cite{BS} one can easily construct a Devaney chaotic map on the
Cantor fan (the cone over the Cantor set) which has zero entropy. On
the other hand, if $X$ is a compact metric space containing a free
arc and $X$ is not a union of finitely many disjoint circles, then
every transitive map on $X$ has positive entropy \cite{DSS}.
However, the following question is still open.

\begin{question}[Baldwin \cite{Bal01}]
Is there a dendrite admitting a transitive map with zero entropy?
\end{question}

The paper is organized as follows. In Section~\ref{S:preliminaries}
we recall all the needed definitions and facts. In
Section~\ref{S:entropyBound} we introduce the so-called $P$-Lipschitz maps
and we give an upper bound for their topological entropy.
Finally, in
Section~\ref{S:applications} we prove \theoremMainRef{}.

\section{Preliminaries}\label{S:preliminaries}
Here we briefly recall all the notions and results which will be needed
in the rest of the paper. The terminology is taken mainly from \cite{Kur2, Nad, Macias, Fal}.

If $M$ is a set, its cardinality is denoted by $\card M$.
The cardinality of infinite countable sets is denoted
by $\aleph_0$. If $M$ is a singleton set we often identify it with its only point.
We write $\NNN$ for the set of positive integers $\{1,2,3,\dots\}$,
$\RRR$ for the set of reals and $\III$ for
the unit interval $[0,1]$.
By an interval we mean any nonempty connected subset of $\RRR$
(possibly degenerate to a point).
For intervals
$J,J'$ we write $J\le J'$ if $t\le s$ for every $t\in J$, $s\in J'$.

By a \emph{space} we mean any nonempty metric space. A space is
called \emph{degenerate} provided it has only one point; otherwise
it is called \emph{non-degenerate}. If $E$ is a subset of a space
$X=(X,d)$ we denote the closure, the interior and the boundary of
$E$ by $\closure{E}$, $\interior{E}$ and $\boundary{E}$,
respectively, and we write $d(E)$ for the diameter of $E$. We say
that two sets $E,F\subseteq X$ are \emph{non-overlapping} if they
have disjoint interiors. For $x\in X$ and $r>0$ we denote the closed
ball with the center $x$ and radius $r$ by $B(x,r)$.

Let $(X,f)$ be a dynamical system. A set $A\subseteq X$ is called
\emph{$f$-invariant} if $f(A)\subseteq A$. For the definitions of (total)
transitivity, (weak, strong) mixing, exact\-ness and (exact) Devaney
chaos see Section~\ref{S:intro}.

\subsection{Minkowski dimension and topological entropy}\label{SS:minkDim}

A subset $S$ of
a metric space $(X,d)$ is called \emph{$\eps$-separated} if
$d(x,y)>\eps$ for every points $x\ne y$ from $S$.
A subset $R$ of $X$ is said
to \emph{$\eps$-span} $X$ if for every $x\in X$ there is $y\in R$
with $d(x,y)\le \eps$. We will denote by $s(X,\eps)$ and $r(X,\eps)$
the maximal cardinality of $\eps$-separated set and the minimal cardinality
of $\eps$-spanning set, respectively. We see at once that
$r(X,\eps) \le s(X,\eps)\le r(X,\eps/2)$.
The \emph{upper} and \emph{lower Minkowski (ball) dimensions} of $X$ are defined by
(see e.g \cite[5.3]{Mattila} or \cite[Definition~3.2.7]{Katok})
$$
 \dimUpper(X)
  = \limsup\limits_{\eps\to 0} \frac{\log r(X,\eps)}{-\log \eps}
\qquad\text{and }\qquad
 \dimLower(X)
  = \liminf\limits_{\eps\to 0} \frac{\log r(X,\eps)}{-\log \eps}
\,.
$$

Let $(X,f)$ be a dynamical system.
For $n\in\NNN$ define a metric $d_n$ on $X$ by
$d_n(x,y) = \max\limits_{0\le i<n} d(f^i(x),f^i(y))$.
We say that a set $S\subseteq X$ is \emph{$(n,\eps)$-separated} if
it is an $\eps$-separated subset of $(X,d_n)$, i.e.~if for every
distinct $x,y\in S$ there is $i<n$ with $d(f^i(x),f^i(y))>\eps$.
A set $R\subseteq X$ is said to \emph{$(n,\eps)$-span} $X$
if it $\eps$-spans $(X,d_n)$.
Let
$s_n(X,\eps,f)$ and $r_n(X,\eps,f)$ denote the maximal cardinality of $(n,\eps)$-separated
subsets and the minimal cardinality of $(n,\eps)$-spanning subsets of $X$, respectively.
Then, by \cite{Bowen71} and \cite{Dinaburg},
the \emph{topological entropy $h(f)$} of the
system $(X,f)$ is given by
\begin{equation}\label{EQ:BowenDefEntropy}
 h(f)
 = \lim\limits_{\eps\to 0} \limsup\limits_{n\to\infty} \frac 1n \log r_n(X,\eps,f)
 = \lim\limits_{\eps\to 0} \limsup\limits_{n\to\infty} \frac 1n \log s_n(X,\eps,f) \,.
\end{equation}

\subsection{Continua}\label{SS:continua}
A \emph{continuum} is a connected compact metric space.
A \emph{cut point} (or a \emph{separating point}) of a continuum $X$ is any point $x\in X$ such that
$X\setminus\{x\}$ is disconnected. A point $x$ of a continuum $X$
is called a \emph{local separating point} of $X$ if there is a connected
neighborhood $U$ of $x$ such that $U\setminus\{x\}$ is not connected.
If $a, b$ are points of $X$ then
any cut point of $X$ such that $a,b$ belong to different components of
$X\setminus\{x\}$ is said to \emph{separate $a,b$}. The set of all
such points is denoted by $\Cut(a,b)$ or $\Cut_X(a,b)$. If $a=b$ then obviously $\Cut(a,b)=\emptyset$.

Let $X$ be a continuum. A metric $d$ on $X$ is said to be \emph{convex}
provided for every distinct $x,y\in X$
there is $z\in X$ such that $d(x,z)=d(z,y)=d(x,y)/2$.
By \cite[Theorem~8]{BingPartSet} every locally connected
continuum admits a compatible convex metric.

If $X$ is a continuum, we introduce the following notion.
A \emph{splitting} of $X$ is any system
$\AAa=\{X_1,\dots,X_n\}$ of non-degenerate subcontinua
covering $X$
such that $P_\AAa=\bigcup_{i\ne j} X_i\cap X_j$ is finite.
In a splitting $\AAa$, every $X_i\in\AAa$
is \emph{regular closed} (i.e.~$X_i$ is closed and $\interior{X_i}$ is dense in it),
$\interior{X_i}=X_i\setminus\left( \bigcup_{j\ne i} X_j \right)\supseteq
X_i\setminus P_\AAa$ and $\partial{X_i}\subseteq P_\AAa$.

\begin{lemma}\label{L:convexMetricOnUnion}
Let $X$ be a continuum and $\AAa=\{X_1,\dots,X_n\}$ be a splitting of $X$.
For $i=1,\dots,n$
let $d_i$ be a convex metric on $X_i$ satisfying
$d_i(x,y)=d_j(x,y)$ whenever $i\ne j$ and $x,y\in X_i\cap X_j$.
For $x,y\in X$ put
\begin{equation}\label{EQ:convexMetricOnUnion}
\begin{split}
 d(x,y) = \inf\Bigg\{
   &\sum_{k=1}^m d_{i_k}(x_{k-1},x_k):\
   m\ge 1,\
   x_0=x,\ x_m=y
 \\
 &\text{ and }
   x_{k-1},x_k\in X_{i_k} \text{ for  } k=1,\dots,m
 \Bigg\}.
\end{split}
\end{equation}
Then $d$ is a convex metric on $X$ equivalent with the original one
and the infimum in (\ref{EQ:convexMetricOnUnion}) is in fact the minimum.
Moreover, if $d(x,y)=d_i(x,y)$ whenever $x,y\in P_\AAa\cap X_i$, then
$$
 d|_{X_i\times X_i} = d_i.
$$
\end{lemma}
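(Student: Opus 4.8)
The plan is to recognize the formula \eqref{EQ:convexMetricOnUnion} as a shortest-path (gluing) metric and to reduce the infimum over all chains to a minimum over finitely many combinatorial paths. First I would record that the edge weights are unambiguous: if $p,q$ lie in both $X_i$ and $X_j$ then $p,q\in X_i\cap X_j\subseteq P_\AAa$ and the compatibility hypothesis gives $d_i(p,q)=d_j(p,q)$, so $w(p,q):=d_i(p,q)$ depends only on $p,q$. Symmetry of $d$ and the triangle inequality are then immediate by reversing and concatenating chains, and $d(x,x)=0$ by the one-term chain. The key reduction is that in any chain every index change forces the joining point into $X_{i_k}\cap X_{i_{k+1}}\subseteq P_\AAa$; hence, after merging consecutive segments lying in one piece (which does not increase the sum, by the triangle inequality for $d_{i_k}$) and deleting repeated vertices (which does not increase it, by nonnegativity), every minimizing chain becomes a simple path in the finite graph $G$ with vertex set $P_\AAa\cup\{x,y\}$ and weights $w$. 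Since $X$ is connected and covered by the connected sets $X_i$, the intersection pattern of the $X_i$ is connected, so $x$ and $y$ are joined in $G$ and the infimum is a minimum over finitely many simple paths. As each such path with $x\ne y$ has at least one edge of positive weight, $d(x,y)>0$; thus $d$ is a metric and the infimum is attained.

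Next I would prove equivalence with the original metric. I expect it to suffice to show that $\id\colon (X,d_{\mathrm{orig}})\to(X,d)$ is continuous, for then a continuous bijection from a compact space onto a (Hausdorff) metric space is a homeomorphism. Fixing $x$, the set $\bigcup_{i\colon x\notin X_i}X_i$ is closed and misses $x$, so some original neighborhood $U$ of $x$ lies in $\bigcup_{i\colon x\in X_i}X_i$; on each of the finitely many pieces $X_i\ni x$ the one-term chain gives $d(x,y)\le d_i(x,y)$, and $d_i$ induces the subspace topology, so shrinking $U$ makes $d(x,\cdot)$ uniformly small on $U$. Hence $d$ is continuous for the original topology and the two metrics are equivalent.

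For convexity I would use that each $(X_i,d_i)$, being a compact (hence complete) convex metric space, is geodesic. Given distinct $x,y$, I take a minimizing chain realizing $d(x,y)$ and replace each of its segments by a metric segment in the corresponding piece; concatenating and reparametrizing by arclength yields a point $z$ at arclength $d(x,y)/2$, lying in some $X_i$. Truncating the chain at $z$ on either side exhibits chains of length $d(x,y)/2$ from $x$ to $z$ and from $z$ to $y$, so $d(x,z),d(z,y)\le d(x,y)/2$; the triangle inequality forces both to equal $d(x,y)/2$, proving convexity.

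The hard part will be the ``Moreover'' clause. The inequality $d\le d_i$ on $X_i\times X_i$ is again the one-term chain; for the reverse I would take $x,y\in X_i$, a minimizing chain $\gamma$ from $x$ to $y$, and its vertices lying in $X_i$, say $x=p_0,p_1,\dots,p_r=y$ in order. Minimality of $\gamma$ gives $d(x,y)=\sum_s d(p_{s-1},p_s)$. The crux is a dichotomy at each consecutive pair $p_{s-1},p_s$: if the sub-chain between them is a single segment lying in $X_i$, it contributes exactly $d_i(p_{s-1},p_s)$; otherwise the first and last segments of the sub-chain leave $X_i$, which forces $p_{s-1}$ and $p_s$ into some $X_i\cap X_{i'}\subseteq P_\AAa$, so the hypothesis $d(p_{s-1},p_s)=d_i(p_{s-1},p_s)$ applies. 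In every case $d(p_{s-1},p_s)=d_i(p_{s-1},p_s)$, whence $d(x,y)=\sum_s d_i(p_{s-1},p_s)\ge d_i(x,y)$ by the triangle inequality for $d_i$, giving $d|_{X_i\times X_i}=d_i$. I expect the careful bookkeeping of the endpoints $x,y$ (which need not lie in $P_\AAa$) through this dichotomy to be the only genuinely delicate point.
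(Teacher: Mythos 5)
Your proposal is correct and follows essentially the same route as the paper: reduce every chain to one whose intermediate points lie in the finite set $P_\AAa$ (so the infimum is a minimum over finitely many configurations), get equivalence of the metrics from compactness plus one-sided continuity of the identity, and prove the \emph{Moreover} clause by applying the hypothesis at the points where a minimizing chain meets $P_\AAa\cap X_i$ together with the triangle inequality for $d_i$. The differences are only presentational: the paper applies the hypothesis once, to the first and last touch points $x_1,x_{m-1}$ of the minimizing chain with $X_i$, rather than to every consecutive pair as you do, and it dismisses convexity as straightforward where you spell it out via geodesic concatenation in the pieces.
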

\begin{proof}
Using convexity of every $d_i$ we may assume that, in the infimum from (\ref{EQ:convexMetricOnUnion}), $i_{k}\ne i_{k+1}$, $x_k\in P_\AAa$
for $1\le k<m$ and $x_k\ne x_l$ for every $k\ne l$.
Since $P_\AAa$ is finite we have that, for any fixed $x,y$, the infimum is in fact the minimum
and there are $m\ge 1$, $i_1,\dots,i_m$, $x_0=x,x_1,\dots,x_m=y$ such that
$x_0,\dots,x_{m}$ are pairwise distinct,
$x_0\in X_{i_1}$, $x_m\in X_{i_m}$,
\begin{equation}\label{EQ:convexMetricOnUnion2}
 d(x,y) =
   \sum_{k=1}^m d_{i_k}(x_{k-1},x_k)
 \quad
 \text{and}
 \quad
 x_k\in X_{i_k}\cap X_{i_{k+1}}\cap P_\AAa
 \quad \text{for } 1\le k< m.
\end{equation}

The fact that $d$ is a convex pseudometric is straightforward
and, by (\ref{EQ:convexMetricOnUnion2}), $d(x,y)=0$ implies $x=y$.
To show that
$d$ is equivalent with the original metric, fix a sequence $(x_k)_k$ and a point $x$;
since $X$ is compact we only need to prove that $x_k\to x$ implies $d(x_k,x)\to 0$. It is no loss of
generality in assuming that, for some $i$, $x_k\in X_i$ for every $k$.
Since $x_k\to x$, also $x\in X_i$ and $d(x_k,x)\le d_i(x_k,x)\to 0$.

Now fix $i$ and assume that $d(x,y)=d_i(x,y)$ for every $x,y\in P_\AAa\cap X_i$.
Take arbitrary $x,y\in X_i$. Then $d(x,y)\le d_i(x,y)$ by (\ref{EQ:convexMetricOnUnion}).
To show the opposite inequality take
$i_1,\dots,i_m$ and $x_0=x,x_1,\dots,x_m=y$ satisfying (\ref{EQ:convexMetricOnUnion2});
we can assume that $i_1=i_m=i$ (so possibly $x_0=x_1$ or $x_{m-1}=x_m$).
Since $x_1,x_{m-1}\in P_\AAa\cap X_i$, we get $d(x_1,x_{m-1})=d_i(x_1,x_{m-1})$
and
$$
 d_i(x,y)
 \le d_i(x,x_1)+d_i(x_1,x_{m-1}) + d_i(x_{m-1},y)
 \le \sum_{k=1}^m d_{i_k}(x_{k-1},x_k)=d(x,y) \,.
$$
Thus $d(x,y)=d_i(x,y)$ for every $x,y\in X_i$.
\end{proof}

\subsection{Hausdorff one-dimensional measure}\label{SS:hausdorffMeasure}
For a Borel subset $B$ of a metric
space $(X,d)$ the \emph{one-dimensional Hausdorff measure} of $B$ is defined by
$$
 \lengthd{d}{B}=\lim_{\delta\to 0} \lengthd{d,\delta}{B},
 \quad
 \lengthd{d,\delta}{B} = \inf\left\{
   \sum_{i=1}^\infty d(E_i):\ B\subseteq \bigcup_{i=1}^\infty E_i,\ d(E_i)<\delta
 \right\}.
$$
We say that $(X,d)$ has \emph{finite length} if $\lengthd{d}{X}<\infty$.
By e.g.~\cite[Proposition~4A]{Fre92},
\begin{equation}\label{EQ:length-diam}
\lengthd{d}{C}\ge d(C)
\qquad
\text{whenever }
C
\text{ is a connected Borel subset of }
X.
\end{equation}
If $A\subseteq X$ is an arc then $\lengthd{d}{A}$ is equal to the length of $A$
\cite[Lemma~3.2]{Fal}.
In the case when $(X,d)$ is the Euclidean real line $\RRR$
and $J\subset \RRR$ is an interval, $\lengthd{d}{J}$
is equal to the length of $J$ and we denote it simply by $\abs{J}$.

If $(X,d)$ is a continuum of finite length endowed with a convex metric $d$,
then it has the
so-called \emph{geodesic property} (see e.g.~\cite[Corollary~4E]{Fre92}):
for every distinct $x,y\in X$
there is an arc $A$ with end points $x,y$ such that $d(x,y)=\lengthd{d}{A}$;
any such arc $A$ is called a \emph{geodesic arc} or shortly a \emph{geodesic}.
Every subarc of a geodesic is again a geodesic. If $x,y$ are the end points
of a geodesic $A$ and $z\in A$ then $d(x,y)=d(x,z)+d(z,y)$.

\subsection{Lipschitz maps}\label{SS:lipschitzMaps}

A map $f:(X,d)\to (Y,\varrho)$ between metric spaces is called
\emph{Lipschitz} with a Lipschitz constant $L\ge 0$, shortly
\emph{Lipschitz-$L$}, provided $\varrho(f(x),f(x')) \le L\cdot
d(x,x')$ for every $x,x'\in X$; the smallest such $L$ is denoted by
$\Lip(f)$ and is called the \emph{Lipschitz constant} of $f$. If
$f:X\to Y$ is Lipschitz-$L$ then
$\lengthd{\varrho}{f(B)} \le L\cdot \lengthd{d}{B}$
for every Borel set $B\subset X$ such that $f(B)$ is Borel-measurable
\cite[p.~10]{Fal}. We will use the following simple fact.

\begin{lemma}\label{L:pwLip}
Let $X$ be a continuum with a convex metric $d$ and let
$\AAa=\{X_1,\dots,X_n\}$ be a splitting of $X$.
Let $(Y,\varrho)$ be a
metric space. Then any map $f:(X,d)\to(Y,\varrho)$ with
Lipschitz-$L$ restrictions $f|_{X_i}$ ($i=1,\dots,n$) is
Lipschitz-$L$.
\end{lemma}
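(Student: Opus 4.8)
The plan is to estimate $\varrho(f(x),f(y))$ for arbitrary $x,y\in X$ by travelling from $x$ to $y$ along a geodesic segment and applying the given Lipschitz bounds piece by piece. The point is that, because $P_\AAa$ is finite, such a segment can meet $P_\AAa$ only finitely often, so it splits into finitely many subarcs, each lying inside a single $X_i$; on each of these the hypothesis $\Lip(f|_{X_i})\le L$ applies, and the estimates telescope into a global Lipschitz bound with the same constant $L$.

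First I would dispose of the trivial case $x=y$ and assume $x\ne y$. Since $d$ is convex and $X$, being a continuum, is compact and hence complete, any two distinct points are joined by a segment, i.e.\ an isometric embedding $\gamma\colon[0,\ell]\to X$ with $\gamma(0)=x$, $\gamma(\ell)=y$ and $d(\gamma(s),\gamma(t))=\abs{s-t}$, where $\ell=d(x,y)$ (one may also invoke the geodesic property). In particular $\gamma$ is injective, so $\gamma^{-1}(P_\AAa)$ is finite; listing its points together with $0$ and $\ell$ gives a partition $0=u_0<u_1<\dots<u_m=\ell$. Next I would check that each piece lies in one member of the splitting: on each open interval $(u_{j-1},u_j)$ the image $\gamma((u_{j-1},u_j))$ is connected and disjoint from $P_\AAa$, and since $X\setminus P_\AAa=\bigsqcup_i\interior{X_i}$ is a disjoint union of open sets, this image lies in a single $\interior{X_{i_j}}$; by closedness of $X_{i_j}$ we then get $\gamma([u_{j-1},u_j])\subseteq X_{i_j}$.

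With the decomposition in hand, the triangle inequality, the hypothesis that each $f|_{X_{i_j}}$ is Lipschitz-$L$, and the isometry relation $d(\gamma(u_{j-1}),\gamma(u_j))=u_j-u_{j-1}$ would yield
$$
 \varrho(f(x),f(y))
 \le \sum_{j=1}^m \varrho\bigl(f(\gamma(u_{j-1})),f(\gamma(u_j))\bigr)
 \le L\sum_{j=1}^m (u_j-u_{j-1})
 = L\,\ell = L\,d(x,y),
$$
which is exactly the desired estimate.

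The only genuine obstacle is the geometric input, namely that $x$ and $y$ can be joined by an arc-length-parametrized segment crossing the seam set $P_\AAa$ only finitely often. Convexity together with compactness supplies such a segment, and its injectivity makes the number of crossings finite; after that the argument is pure bookkeeping with the triangle inequality. A minor point to verify carefully is that $X\setminus P_\AAa=\bigsqcup_i\interior{X_i}$ really is a disjoint open cover, which follows from the recorded properties of a splitting: $\interior{X_i}=X_i\setminus\bigcup_{j\ne i}X_j\supseteq X_i\setminus P_\AAa$, the interiors are pairwise disjoint, and $\partial X_i\subseteq P_\AAa$.
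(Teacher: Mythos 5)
Your proof is correct, and its core---telescoping the triangle inequality along a chain $x=x_0,\dots,x_m=y$ in which consecutive points share a piece of the splitting and $\sum_k d(x_{k-1},x_k)=d(x,y)$---is the same as the paper's; what differs is how the chain is produced, and there your route is genuinely different and in fact more self-contained. The paper obtains the chain by invoking Lemma~\ref{L:convexMetricOnUnion}, where the distance is \emph{defined} as an infimum over such chains and shown to be attained; strictly speaking, that lemma concerns a metric assembled from prescribed convex metrics $d_i$ on the pieces, whereas in Lemma~\ref{L:pwLip} one is handed a single convex metric $d$ on $X$ whose restrictions $d|_{X_i\times X_i}$ need not be convex (think of a circle split into two arcs meeting at antipodal points), so the paper's citation is a little loose---harmless in practice, since every later application of the lemma uses a metric built via (\ref{EQ:convexMetricOnUnion}). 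You instead take a metric segment from $x$ to $y$, cut it at its finitely many crossings of $P_\AAa$, and use the fact that the interiors $\interior{X_i}$ form a pairwise disjoint open cover of $X\setminus P_\AAa$ to place each closed subarc inside a single $X_i$; this proves the lemma exactly as stated, with no assumption on the restricted metrics, and all your verifications (injectivity of the segment, finiteness of crossings, the disjoint open cover, the closure step) are sound. The one ingredient you should cite explicitly is Menger's theorem that a complete convex metric space is geodesic, because the geodesic property recorded in Section~\ref{SS:hausdorffMeasure} is stated only for continua of \emph{finite length}, which Lemma~\ref{L:pwLip} does not assume; with that classical reference supplied, your argument is complete.
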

\begin{proof}
By Lemma~\ref{L:convexMetricOnUnion},
for any $x,y\in X$
there are $x_0=x,x_1,\dots,x_{m-1},x_m=y$ such that
$x_{k-1},x_k\in X_{i_k}$ ($k=1,\dots,m$) and
$d(x,y)=\sum_{k=1}^m d(x_{k-1},x_k)$.
So
$$
 \varrho(f(x),f(y))
 \le \sum_{k=1}^m \varrho(f(x_{k-1}),f(x_k))
 \le L\cdot \sum_{k=1}^m d(x_{k-1},x_k)
 = L\cdot d(x,y).
$$
\end{proof}

\subsection{Totally regular continua}\label{SS:totallyRegular}
By e.g.~\cite{Kur2,Nik}, a
continuum $X$ is called
\begin{itemize}
    \item a \emph{dendrite} if it is locally connected and contains no simple closed curve;
    \item a \emph{local dendrite} if it is locally connected and contains at most finitely many
     simple closed curves;
    \item \emph{completely regular} if it contains no
     non-degenerate nowhere dense subcontinuum;
    \item \emph{totally regular} if for every $x\in X$ and every countable set $P\subseteq X$
     there is a basis of neighborhoods of $x$ with finite boundary not intersecting $P$;
    \item \emph{regular} if every $x\in X$ has a basis of neighborhoods with
     finite boundary;
    \item \emph{hereditarily locally connected} if every subcontinuum of $X$ is locally
     connected;
    \item a \emph{curve} if it is one-dimensional.
\end{itemize}
Notice that (local) dendrites as well as completely regular continua
are totally regular and (totally) regular continua are hereditarily locally connected, hence they are locally connected curves.

We will need the following simple estimate of the maximal cardinality of $\eps$-separated
sets in totally regular continua $X$. Notice that an analogous estimate follows from
the fact that the $1$-dimensional Minkowski content is equal to $\length{X}$ for
continua $X\subseteq \RRR^n$ of finite length (\cite[Theorem~3.2.39]{Federer}, see also \cite[p.~75]{Mattila}).

\begin{lemma}\label{L:separatedSetsInContinuaOfFiniteLength}
 Let $X=(X,d)$ be a non-degenerate totally regular continuum. Then
 $$
  \dimLower(X)=\dimUpper(X)=1
  \quad\text{and}\quad
  s(X,\eps) \le \frac{2\cdot\length{X}}{\eps}
  \quad\text{for every }0<\eps<d(X)\,.
 $$
\end{lemma}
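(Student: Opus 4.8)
The plan is to prove the separated-set estimate first, since the dimension equalities will follow from it together with the general inequalities relating $s$, $r$, and the Minkowski dimensions. So I would begin by fixing $0<\eps<d(X)$ and taking a maximal $\eps$-separated set $S=\{x_1,\dots,x_N\}$ in $X$, where $N=s(X,\eps)$. The key geometric idea is that the closed balls $B(x_j,\eps/2)$ are pairwise disjoint (any point in two of them would force $d(x_i,x_j)\le\eps$, contradicting $\eps$-separation), so I want to charge each ball a definite amount of one-dimensional Hausdorff measure and then sum. Here I would switch to a convenient metric: since $X$ is totally regular it admits a compatible convex metric $d$ under which $(X,d)$ has finite length, and I may compute $s(X,\eps)$ and $\length{X}$ with respect to this metric (changing to an equivalent metric only rescales $\eps$, and the statement is about the intrinsic length, so I would phrase everything for the convex metric realizing the finite length).

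The heart of the argument is the lower bound on the measure inside each ball. For each center $x_j$, because $\eps/2<d(X)/2<d(X)$ there is a point $y_j\in X$ with $d(x_j,y_j)=\eps/2$ (using connectedness: $X$ is connected, so the continuous function $z\mapsto d(x_j,z)$ takes every value between $0$ and $d(X)$, and since $\eps<d(X)$ the value $\eps/2$ is attained). By the geodesic property of a finite-length continuum with a convex metric (stated in the excerpt), there is a geodesic arc $A_j$ from $x_j$ to $y_j$ with $\length{A_j}=d(x_j,y_j)=\eps/2$. This arc lies inside the closed ball $B(x_j,\eps/2)$ because every point $z\in A_j$ satisfies $d(x_j,z)\le d(x_j,y_j)=\eps/2$ along the geodesic. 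Hence each disjoint ball contains a connected Borel set of length exactly $\eps/2$, and by the inequality $\length{C}\ge d(C)$ — or more directly by the additivity/monotonicity of Hausdorff measure on the pairwise disjoint arcs $A_j$ — I get
\[
 \length{X} \;\ge\; \sum_{j=1}^N \length{A_j} \;=\; N\cdot\frac{\eps}{2},
\]
which rearranges to $N=s(X,\eps)\le 2\,\length{X}/\eps$, exactly the claimed bound.

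For the dimension statement I would feed this estimate into the definitions. From $r(X,\eps)\le s(X,\eps)\le 2\length{X}/\eps$ (using the displayed chain $r(X,\eps)\le s(X,\eps)\le r(X,\eps/2)$ from the preliminaries) I obtain $\log r(X,\eps)\le \log(2\length{X})-\log\eps$, so
\[
 \frac{\log r(X,\eps)}{-\log\eps}\;\le\;\frac{\log(2\length{X})}{-\log\eps}+1\;\xrightarrow[\eps\to0]{}\;1,
\]
giving $\dimUpper(X)\le 1$. For the reverse inequality $\dimLower(X)\ge 1$, I would use that $X$ is non-degenerate and connected, hence contains an arc, and an arc has lower Minkowski dimension $1$; since $X$ contains a nondegenerate subcontinuum (itself), the inequality $\length{C}\ge d(C)$ combined with a spanning-set lower bound forces $r(X,\eps)\ge c/\eps$ for a connected piece, yielding $\dimLower(X)\ge1$. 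Combining $1\le\dimLower(X)\le\dimUpper(X)\le1$ closes the chain.

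I expect the main obstacle to be bookkeeping about which metric realizes the finite length and ensuring the estimate is stated metric-independently: the quantity $s(X,\eps)$ depends on the metric, whereas $\length{X}$ in the statement presumably refers to the length in the finite-length convex metric, so I must either fix that metric throughout or track the Lipschitz distortion constant between an arbitrary compatible metric and the convex one. The geometric core — disjoint balls each swallowing a geodesic of length $\eps/2$ — is clean; the delicate point is confirming that the geodesic from $x_j$ to a point at distance $\eps/2$ stays within the ball (immediate from the geodesic additivity $d(x_j,y_j)=d(x_j,z)+d(z,y_j)$) and that a point at exactly distance $\eps/2$ exists, which needs $\eps/2<d(X)$ and connectedness rather than $\eps<d(X)$; since $\eps<d(X)$ gives $\eps/2<d(X)/2<d(X)$ this is fine, but I would state it carefully.
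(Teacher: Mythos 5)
Your counting skeleton (pairwise disjoint balls $B(x_j,\eps/2)$, each charged at least $\eps/2$ of $\HHh^1$-measure) is exactly the paper's, but the way you obtain the per-ball lower bound creates a genuine gap. The lemma is a statement about the \emph{given} metric $d$: since the case $\lengthd{d}{X}=\infty$ is vacuous, the entire content is the case where $d$ itself has finite length, and such a $d$ need not be convex. Your argument replaces $d$ by some compatible convex metric of finite length and proves the estimate for \emph{that} metric. This transfer is not ``bookkeeping'': topologically equivalent metrics on a compactum need not be bi-Lipschitz, so there is no ``Lipschitz distortion constant'' to track (compare the von Koch arc with its Euclidean metric and the same arc re-metrized as a unit segment: both are compatible metrics on an arc, and neither $s(X,\eps)$ nor $\HHh^1$ transforms controllably between them). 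Consequently the inequality you prove for the convex metric says nothing about $s(X,\eps)$ and $\length{X}$ computed in $d$, which is what the statement asserts. A repair along your lines would pass to the intrinsic metric $d'(x,y)=\inf\{\length{A}:A\text{ an arc from }x\text{ to }y\}$, for which $d'\ge d$ (so any $\eps$-separated set for $d$ is $\eps$-separated for $d'$) and $\HHh^1_{d'}=\HHh^1_{d}$; but convexity of $d'$ and the invariance of the measure are nontrivial (Fremlin-type) facts, i.e.\ precisely the machinery the paper's proof avoids.

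The paper gets the per-ball bound with no convexity and no geodesics: for $0<r<d(X)/2$ the ball $B(x_j,r)$ is a proper closed subset of $X$, so the connected component $C$ of $x_j$ in it reaches the boundary of the ball (boundary bumping in a continuum), whence $d(C)\ge r$ and, by (\ref{EQ:length-diam}), $\length{B(x_j,r)}\ge\length{C}\ge r$; this is (\ref{EQ:lengthOfXcapBall}), and it holds verbatim in the given metric. Note that your geodesic argument is perfectly fine in the situations where the lemma is actually applied later (e.g.\ in Proposition~\ref{P:entropyOfPLipschitz} the metric \emph{is} convex with finite length), so the fix is local: replace the geodesic step by the component argument, and drop the change of metric. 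Two smaller points: the function $z\mapsto d(x_j,z)$ attains every value in $[0,\max_z d(x_j,z)]$, not in $[0,d(X)]$ --- the maximum can be as small as $d(X)/2$, which is exactly why $\eps<d(X)$ is the right hypothesis; and your deduction of $\dimUpper(X)\le 1$ from the $s$-estimate needs $\length{X}<\infty$, so for a general compatible metric one must, as the paper does, fall back on the finite-length presentation of a totally regular continuum (the citation to Federer), while your lower bound $\dimLower(X)\ge 1$ needs only that $X$ is connected and non-degenerate (a spanning set forces $r(X,\eps)\gtrsim d(X)/\eps$), not the inequality $\length{C}\ge d(C)$.
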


\begin{proof}
Let us first prove the inequality for $s(X,\eps)$.
There is no loss of generality in assuming $\length{X}<\infty$.
By (\ref{EQ:length-diam}), 
\begin{equation}\label{EQ:lengthOfXcapBall}
 \length{B(x,r)} \ge r
 \qquad
 \text{for every } x\in X
 \text{ and } 0<r<d(X)/2 \,.
\end{equation}
Take $0<\eps<d(X)$ and an
$\eps$-separated set $S=\{x_1,\dots,x_k\}$ of maximal cardinality $k=s(X,\eps)$.
Since $d(x_i,x_j)>\eps$
for every $i\ne j$, the closed balls $B(x_i,\eps/2)$ are disjoint.
Applying (\ref{EQ:lengthOfXcapBall}) we conclude that
$
 \length{X}
 \ge \sum_i \length{B(x_i,\eps/2)}
 \ge k \eps/2
$, so
$s(X,\eps)\le 2\length{X}/\eps$.

The equality $\dimLower(X)=\dimUpper(X)=1$ is a special case
of \cite[Theorem~3.2.39]{Federer}.
(Notice that $\dimUpper(X)\le 1$ immediately follows from the upper estimate
for $s(X,\eps)$.)
\end{proof}

The following lemma will be used in the proof of Lemma~\ref{P:exactLipschitz}.
\begin{lemma}\label{L:totRegCircle}
If $X$ is a totally regular continuum which is not a dendrite, then there are
points
$a,b\in X$ and subcontinua $X_0,X_1$ of $X$ such that
$X_0\cup X_1=X$, $X_0\cap X_1=\{a,b\}$ and $\Cut_{X_i}(a,b)$ is uncountable for $i=0,1$.
\end{lemma}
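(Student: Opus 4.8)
The plan is to cut $X$ along a single, carefully chosen arc. Since $X$ is totally regular it is hereditarily locally connected, and as it is not a dendrite it contains a simple closed curve $S$. Fix a compatible metric with finite length. By \eqref{EQ:length-diam} every subcontinuum $C$ with $d(C)\ge\eps$ has $\length{C}\ge\eps$, so any family of pairwise non-overlapping subcontinua of diameter $\ge\eps$ is finite (otherwise their lengths would exceed $\length{X}$); thus $X$ is finitely Suslinian. In particular $X\setminus S$ has only countably many components, and those attached to $S$ at two or more points (the ``chords'') have diameters tending to $0$. I call a nondegenerate arc $\alpha\subseteq X$ \emph{clean} if no simple closed curve of $X$ crosses $\interior{\alpha}$; that is, whenever a simple closed curve meets $\interior{\alpha}$ at a point $p$, a neighborhood of $p$ in that curve lies on $\alpha$. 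The whole argument reduces to producing one clean arc.

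Granting a clean arc $\alpha=[a',b']$, I would finish as follows. Note first that cleanness forces each component of $X\setminus\alpha$ to meet $\interior{\alpha}$ in at most one point: two meeting points would, together with a subarc of $\alpha$, bound a simple closed curve crossing $\interior{\alpha}$. Choose $a,b\in\interior{\alpha}$ with $a'<a<b<b'$ along $\alpha$ and set
\[
  X_0=[a,b]\cup\bigcup\bigl\{\closure{C}: C\text{ a component of }X\setminus\alpha\text{ whose closure meets }(a,b)\bigr\},
  \qquad
  X_1=\closure{X\setminus X_0},
\]
assigning any piece hanging at $a$ or $b$ to $X_1$. Since every piece hanging on $(a,b)$ is attached at a single point it creates no bypass, so $X_0,X_1$ are subcontinua with $X_0\cup X_1=X$ and $X_0\cap X_1=\{a,b\}$. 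Removing any $p\in(a,b)$ separates $a$ from $b$ in $X_0$, whence $\Cut_{X_0}(a,b)\supseteq(a,b)$; and in $X_1$ the only route out of $a$ is along the stub $[a',a]$, whose interior is uncrossed because $\alpha$ is clean, so every point of $(a',a)$ separates $a$ from $b$ in $X_1$ and $\Cut_{X_1}(a,b)\supseteq(a',a)$. Both cut sets are uncountable, as required.

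It remains—and this is the heart of the matter—to exhibit a clean arc. If every component of $X\setminus S$ is attached to $S$ at a single point, then any subarc of $S$ is clean. Otherwise the obstruction is exactly the chords: each, with a subarc of $S$, bounds a simple closed curve, and if chord endpoints are dense in $S$ no subarc of $S$ is clean. The plan is then to pass inside a chord: a chord of largest diameter is again a continuum whose interior is, generically, uncrossed, and if not one recurses, each step moving to a chord of strictly smaller diameter. The finitely Suslinian property should force this recursion to stop: an infinite strictly nested sequence of chords would shrink to a point at which arbitrarily small simple closed curves accumulate transversally, and such a point admits no neighborhood with finite boundary, contradicting regularity. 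Making this selection precise—showing that the chords cannot shortcut every arc at every scale, and extracting an actual clean arc rather than merely a clean point—is where I expect the real work to lie, and it is precisely here that total regularity (rather than mere local connectedness) is indispensable: a locally connected non-dendrite such as the Sierpi\'nski carpet is crossed on every arc at every scale and admits no clean arc at all.
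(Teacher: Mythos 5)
Your finishing argument (cutting along a clean arc) is essentially sound, but the proof has a genuine gap exactly where you say it does: nothing in the proposal establishes that a clean arc exists, and the sketch offered for it does not close the gap. The termination heuristic fails as stated: arbitrarily small simple closed curves accumulating at a point do \emph{not} contradict regularity --- in the Hawaiian earring infinitely many circles accumulate ``transversally'' at the wedge point, yet that point has neighborhoods with finite boundary (take all sufficiently small circles whole, plus short arcs on the finitely many large ones), and the space is totally regular. So ruling out an infinite recursion requires a genuinely quantitative use of finite length, which the sketch never makes. The recursion itself is also not well defined: a ``chord'' is a subcontinuum, not an arc, the claim that a largest chord is ``generically uncrossed'' is not verified, and you acknowledge that even a successful recursion would a priori produce only a clean \emph{point}, not a clean arc. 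Note that the difficulty is real: attaching to a circle $S$ handles $H_n$ of summable lengths whose endpoint pairs are dense in $S\times S$ yields a totally regular continuum in which \emph{no} subarc of $S$ is clean, so the passage into chords cannot be avoided, and its convergence is the whole content of the lemma on this route.

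There is also a flaw in the reduction itself: cleanness of $\alpha$ alone does not make $X_1=\closure{X\setminus X_0}$ connected. Take $X=S\cup T$, a circle with an arc attached at a single point, and $\alpha=T$: then $\alpha$ is clean (no simple closed curve meets $\interior{\alpha}$ at all), but $X_1=([a',a]\cup S)\cup[b,b']$ has $[b,b']$ as a separate component, so the pair $(X_0,X_1)$ fails the lemma. You need the clean arc to lie \emph{on a simple closed curve}, so that the remainder of that curve joins $a'$ to $b'$ inside $X_1$; your intended construction would produce such an arc, but this requirement has to be carried through the statement of the reduction. For comparison, the paper sidesteps all of this internal analysis: by \cite{BNT}, $X=\invlim(X_n,f_n)$ with $X_n$ graphs and $f_n$ monotone; some coordinate graph $X_m$ contains a circle \cite[Theorem~10.36]{Nad}; one chooses non-vertex points $a_m\ne b_m$ on it whose $\pi_m$-fibers are singletons (only countably many fibers can be non-degenerate), writes $X_m=X_{m,0}\cup X_{m,1}$ with $X_{m,0}\cap X_{m,1}=\{a_m,b_m\}$, and pulls back: monotonicity makes $X_i=\pi_m^{-1}(X_{m,i})$ subcontinua, and the uncountably many singleton-fiber cut points of $X_{m,i}$ separating $a_m$ from $b_m$ pull back to points of $\Cut_{X_i}(a,b)$. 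If you wish to keep your route, the honest remaining statement is: every totally regular continuum that is not a dendrite contains a clean arc lying on a simple closed curve --- and proving that is essentially as hard as the lemma itself.
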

\begin{proof}
 By \cite{BNT} we can write $X$ as the inverse limit
 $\invlim (X_n,f_n)$, where $X_n$ are graphs and $f_n:X_{n+1}\to X_n$ are monotone;
 let $\pi_n:X\to X_n$ ($n\in\NNN$) be the natural projections.
 Since $X$ is not a dendrite,
 by \cite[Theorem~10.36]{Nad}
 there is $m$ such that $X_m$ contains a circle $S$. Let $C$ be the set of all points
 $x_m\in S$ which are not vertices of $X_m$ and are such that $\pi_m^{-1}(x_m)$
 are singletons. Then $S\setminus C$ is countable
 since every system of disjoint non-degenerate subcontinua of $X$ is countable.
 Take $a_m\ne b_m$ from $C$ and put $a=\pi_m^{-1}(a_m)$, $b=\pi_m^{-1}(b_m)$.
 Write $X_m$ as the union $X_{m,0}\cup X_{m,1}$
 of two non-degenerate subgraphs with $X_{m,0}\cap X_{m,1}=\{a_m,b_m\}$.
 Then $X_i=\pi_m^{-1}(X_{m,i})$ ($i=0,1$), being inverse limits of continua
 \cite[Proposition~2.1.17]{Macias},
  are subcontinua of $X$
 covering $X$ and $X_0\cap X_1=\pi_m^{-1}(X_{m,0}\cap X_{m,1}) = \{a,b\}$.
\end{proof}

\subsection{Length-expanding Lipschitz maps on totally regular continua}\label{SS:tentLike}
Here we recall the main results of \cite{SpA}.
We say that a family $\CCc$ of
non-degenerate subcontinua of $X$
is \emph{dense} if every nonempty open set in $X$ contains a member of $\CCc$.
By $\CCc_I$ we denote the system of all non-degenerate closed subintervals of $I$;
the Euclidean metric on $I$ is denoted by $d_I$.

\begin{definition}[\cite{SpA}] Let $X=(X,d)$, $X'=(X',d')$
be non-degenerate (totally regular) continua of finite length and let
$\CCc,\CCc'$ be dense systems of
subcontinua of $X,X'$, respectively.
We say that a continuous map
$f:X\to X'$ is \emph{length-expanding} with respect to $\CCc,\CCc'$
if there exists $\varrho>1$ (called \emph{length-expansivity constant} of $f$)
such that, for every $C\in \CCc$, $f(C)\in\CCc'$ and
\begin{equation}\label{EQ:defLengthExpanding}
	   \text{if} \quad
	   f(C)\ne X'
	   \qquad\text{then}\quad
	   \lengthd{d'}{f(C)} \ge \varrho\cdot \lengthd{d}{C}.
\end{equation}
Moreover, if $f$ is surjective and Lipschitz-$L$ we say
that $f:(X,d,\CCc)\to (X',d',\CCc')$ is \emph{$(\varrho,L)$-length-expanding Lipschitz}.
Sometimes we briefly say that $f$ is
\emph{$(\varrho,L)$-LEL} or only \emph{LEL}.
\end{definition}

\newcommand{\propExact}{
Let $f:(X,d,\CCc)\to (X,d,\CCc)$ be a LEL map. Then $f$ is exact and has finite
positive entropy. Moreover, if $f$ is the composition $\varphi\circ\psi$ of some maps
$\psi:X\to I$ and $\varphi:I\to X$, then $f$ has the
specification property and so	it is exactly Devaney chaotic.
}

\begin{proposition}[\cite{SpA}]\label{P:tentLikeIsExact}
\propExact{}
\end{proposition}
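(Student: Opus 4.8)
The plan is to handle the three assertions in turn, obtaining exactness and the entropy bounds directly from the length-expanding property and reducing specification to the interval factor $g=\psi\circ\varphi$.

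\emph{Exactness.} The engine is a uniform covering lemma: for every $C\in\CCc$ there is $N\in\NNN$ with $f^N(C)=X$, and in fact $N\le M(\lengthd{d}{C})$, where $M(\ell)=\lceil\log(\length{X}/\ell)/\log\varrho\rceil$. To see this, observe that $f^n(C)\in\CCc$ for all $n$, since $f$ maps $\CCc$ into $\CCc$. If $f^n(C)\ne X$ for all $n<N$, then (\ref{EQ:defLengthExpanding}) gives $\lengthd{d}{f^n(C)}\ge\varrho^n\lengthd{d}{C}$ for $n\le N$; as these lengths never exceed $\length{X}<\infty$ and $\varrho>1$, such $n$ are bounded by $M(\lengthd{d}{C})$, so some iterate of $C$ equals $X$, and surjectivity of $f$ keeps it there. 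Exactness is then immediate: given nonempty open $U$, density of $\CCc$ yields $C\in\CCc$ with $C\subseteq U$, and $f^N(U)\supseteq f^N(C)=X$.

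\emph{Finite positive entropy.} Finiteness follows from the Lipschitz bound (note $L\ge\varrho>1$, since $\lengthd{d}{f(C)}\le L\lengthd{d}{C}$ forces $L\ge\varrho$). From $d_n(x,y)\le L^{n-1}d(x,y)$ one sees that any $(\eps/L^{n-1})$-spanning set of $(X,d)$ is $(n,\eps)$-spanning, so $r_n(X,\eps,f)\le s(X,\eps/L^{n-1})\le 2\length{X}L^{n-1}/\eps$ by Lemma~\ref{L:separatedSetsInContinuaOfFiniteLength}; substituting into (\ref{EQ:BowenDefEntropy}) gives $h(f)\le\log L<\infty$. Positivity is immediate from exactness, since every exact map on a non-degenerate compact space has positive entropy \cite{KM}.

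\emph{Specification and exact Devaney chaos.} Put $g=\psi\circ\varphi:I\to I$. Replacing $I$ by the closed subinterval $\psi(X)$ (non-degenerate, since otherwise $f=\varphi\circ\psi$ would be constant while $X$ is not a point) we may assume $\psi$ is surjective; then $g$ is onto, and the relations $\psi f=g\psi$, $f\varphi=\varphi g$ make $\psi$ and $\varphi$ mutual semiconjugacies between $(X,f)$ and $(I,g)$. I first check $g$ is exact: for nonempty open $V\subseteq I$ the set $U=\psi^{-1}(V)$ is open and nonempty, so $f^n(U)=X$ for some $n$; applying $\psi$ and using $\psi f^n=g^n\psi$ together with $\psi(\psi^{-1}(V))=V$ yields $g^n(V)=\psi(f^n(U))=\psi(X)=I$. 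Being locally eventually onto, $g$ is mixing, hence has the (periodic) specification property by the classical theory of interval maps (see e.g.~\cite{Ruette}). Specification then transfers along $\varphi$: given $\eps>0$ choose $\delta>0$ with $d_I(s,t)<\delta\Rightarrow d(\varphi(s),\varphi(t))<\eps$, lift each prescribed $x_j$ to $t_j\in\varphi^{-1}(x_j)$, use $g$-specification to find a (periodic) point $u\in I$ whose $g$-orbit $\delta$-shadows those of the $t_j$, and set $z=\varphi(u)$; since $f^i\circ\varphi=\varphi\circ g^i$, the point $z$ (periodic whenever $u$ is) $\eps$-shadows the required $f$-orbit segments. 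Thus $f$ has specification; this yields a dense set of periodic points, and together with the exactness already established and the infinitude of $X$ it makes $f$ exactly Devaney chaotic.

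\emph{Main obstacle.} The delicate point is specification. A direct attempt to thread an orbit through small neighborhoods $C_j\in\CCc$ of the prescribed points fails to produce a gap bound depending only on $\eps$, because the covering time $M(\lengthd{d}{C_j})$ blows up as the $C_j$ shrink. The factorization $f=\varphi\circ\psi$ is exactly what removes this obstruction: it reduces specification for $f$ to the well-understood mixing interval map $g$ and transports the property back through $\varphi$.
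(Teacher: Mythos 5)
Proposition~\ref{P:tentLikeIsExact} is imported from \cite{SpA}; the present paper states it without proof, so there is no internal argument to compare against, only the route that the statement itself dictates. Your proposal is correct and is essentially that intended route: iterating the length-expansion inequality (\ref{EQ:defLengthExpanding}) so that every $C\in\CCc$ eventually covers $X$ (exactness), the Lipschitz bound together with Lemma~\ref{L:separatedSetsInContinuaOfFiniteLength} for finiteness of entropy, the fact quoted in the introduction from \cite{KM} that exact maps on non-degenerate spaces have positive entropy, and Blokh's specification theorem (via \cite{Ruette}) for the exact, hence mixing, interval map $g=\psi\circ\varphi$ on $\psi(X)$, transported to $f$ through the factor map $\varphi$ --- which is exactly the role the factorization hypothesis plays in the statement, and your handling of the side issues (non-degeneracy of $\psi(X)$, surjectivity after restriction, periodicity of the shadowing point) is sound.
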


For $k\ge 1$ let $f_k:I\to I$ be the piecewise linear continuous map which fixes $0$ and maps
every $[(i-1)/k, i/k]$ onto $I$.

\newcommand{\thmMainA}{
 For every non-degenerate totally regular continuum $X$ and every $a,b\in X$ we can find a convex metric
 $d=d_{X,a,b}$ on $X$ and Lipschitz surjections
 $\varphi_{X,a,b}:I\to X$, $\psi_{X,a,b}:X\to I$
 with the following properties:
\begin{enumerate}
  \item[(a)] $\lengthd{d}{X}= 1$;
  \item[(b)] the system
    $\CCc=\CCc_{X,a,b}=\{\varphi_{X,a,b}(J):\ J\text{ is a closed subinterval of } I\}$ 
    is a dense system of subcontinua of $X$;
  \item[(c)] for every $\varrho>1$ there are
  a constant $L_\varrho$ (depending only on $\varrho$) and
  $(\varrho,L_\varrho)$-LEL maps
   $$
    \varphi:(I,d_I,\CCc_I)\to (X,d,\CCc)
    \quad\text{and}\quad
    \psi:(X,d,\CCc)\to (I,d_I,\CCc_I)
   $$
    with $\varphi(0)=a$, $\varphi(1)=b$ , $\psi(a)=0$ and such that
    $\varphi=\varphi_{X,a,b}\circ f_k$, $\psi=f_l\circ \psi_{X,a,b}$ for some
    $k,l\ge 3$.
\end{enumerate}
Moreover, if $\Cut_X(a,b)$ is uncountable, $d,\varphi,\psi$ can be assumed to satisfy
\begin{enumerate}
  \item[(d)] $d(a,b)>1/2$ and $\psi(b)=1$.
\end{enumerate}
}

\begin{proposition}[\cite{SpA}]\label{T:MAINa}
\thmMainA{}
\end{proposition}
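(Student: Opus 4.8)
The plan is to build the convex metric $d$ and the two base surjections $\varphi_{X,a,b}$, $\psi_{X,a,b}$ simultaneously, by approximating $X$ from inside by graphs and equipping each with a controlled Eulerian traversal. First I would use total regularity together with \cite{BingPartSet} to put on $X$ a compatible convex metric of finite length, normalized so that $\lengthd{d}{X}=1$; this gives (a). As in Lemma~\ref{L:totRegCircle} I would then write $X=\invlim(X_n,f_n)$ with $X_n$ graphs and $f_n$ monotone, with projections $\pi_n\colon X\to X_n$. On each finite graph $X_n$ I would fix an arc from $\pi_n(a)$ to $\pi_n(b)$ and double every edge off this arc; the resulting graph admits an Euler path from $\pi_n(a)$ to $\pi_n(b)$ which traverses the chosen spine once and every other edge twice. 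Parametrizing these paths by $I$ proportionally to traversal length and arranging them compatibly, $f_n\circ\gamma_{n+1}=\gamma_n$, I would pass to the limit to obtain a single Lipschitz surjection $\varphi_{X,a,b}\colon I\to X$ with $\varphi_{X,a,b}(0)=a$ and $\varphi_{X,a,b}(1)=b$.

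The decisive feature of such a traversal is that almost every point has multiplicity at most $2$, so on a graph of length $1$ the path has traversal length at most $2$ and, for every subinterval $J\subseteq I$,
\[
  \tfrac12\,\abs{J}\ \le\ \lengthd{d}{\gamma_n(J)}\ \le\ 2\,\abs{J}.
\]
I would show these bounds persist in the limit, so that $\varphi_{X,a,b}$ is Lipschitz-$L_0$ with $L_0$ of order $2$ and satisfies $\lengthd{d}{\varphi_{X,a,b}(J)}\ge c\,\abs{J}$ for a fixed $c>0$ and every subinterval $J$. The lower bound, applied to images of small intervals about preimages of interior points, shows at once that $\CCc=\{\varphi_{X,a,b}(J)\}$ is a dense family of non-degenerate subcontinua, which is (b). For (c) put $\varphi=\varphi_{X,a,b}\circ f_k$: since $f_k$ sends each piece $[(i-1)/k,i/k]$ linearly onto $I$ with slope $\pm k$, any subinterval $J$ with $\varphi(J)\ne X$ has $f_k(J)=J'$ with $\abs{J'}\ge\tfrac12 k\abs{J}$, whence $\lengthd{d}{\varphi(J)}=\lengthd{d}{\varphi_{X,a,b}(J')}\ge c\abs{J'}\ge\tfrac12 ck\abs{J}$. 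Choosing $k$ odd with $k\ge\max\{3,2\varrho/c\}$ makes this at least $\varrho\abs{J}$; then $\varphi$ is Lipschitz-$kL_0=:L_\varrho$, and $f_k(0)=0$, $f_k(1)=1$ give the endpoint conditions.

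For $\psi_{X,a,b}$ I would construct a companion Lipschitz surjection $X\to I$ — for instance a length-type coordinate read off the traversal, built level by level on the $X_n$ — arranged so that the composite $g=\psi_{X,a,b}\circ\varphi_{X,a,b}\colon I\to I$ expands subintervals from below, $\abs{g(J)}\ge c'\abs{J}$. Since each $C\in\CCc$ equals $\varphi_{X,a,b}(J_0)$ with $\lengthd{d}{C}\le L_0\abs{J_0}$, this yields $\abs{\psi_{X,a,b}(C)}=\abs{g(J_0)}\ge(c'/L_0)\lengthd{d}{C}$; composing with $f_l$ exactly as above and taking $l$ odd with $l\ge\max\{3,2\varrho L_0/c'\}$ gives the LEL property of $\psi=f_l\circ\psi_{X,a,b}$, while $f_l(0)=0$ forces $\psi(a)=0$. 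For (d) I would use that every point of $\Cut_X(a,b)$ lies on each arc joining $a$ and $b$, in particular on the geodesic; when $\Cut_X(a,b)$ is uncountable these separating points fill a non-degenerate arc, and I am free, while keeping $\lengthd{d}{X}=1$, to distribute the metric so that this geodesic carries length exceeding $1/2$, giving $d(a,b)>1/2$. Orienting the traversal and the coordinate $\psi_{X,a,b}$ monotonically along that geodesic lets me impose $\psi_{X,a,b}(b)=1$, and then $f_l(1)=1$ (for $l$ odd) gives $\psi(b)=1$.

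I expect the main obstacle to be the quantitative inverse-limit passage: one must build the convex metric on $X=\invlim X_n$ and the tours $\gamma_n$ together and compatibly — gluing convex metrics across the splittings induced by the graph structure via Lemma~\ref{L:convexMetricOnUnion}, and checking Lipschitz-ness piecewise via Lemma~\ref{L:pwLip} — while guaranteeing that the multiplicity-$2$ bound, the Lipschitz constant $L_0$, and the lower length constants $c$ and $c'$ do not degenerate as $n\to\infty$. Securing the ``expansion from below'' of $g=\psi_{X,a,b}\circ\varphi_{X,a,b}$ uniformly over all levels, rather than only on each finite graph, is the subtlest point; once these uniform two-sided estimates are in hand, the LEL conclusions and the endpoint and normalization conditions (a)--(d) follow from the elementary slope computations for $f_k$ and $f_l$ indicated above.
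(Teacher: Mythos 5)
Proposition~\ref{T:MAINa} is not proved anywhere in this paper: it is imported verbatim from \cite{SpA}, so there is no internal argument to compare your sketch with; what follows judges the sketch against what such a proof must actually deliver. Your global architecture is not unreasonable (inverse limit of graphs with monotone bonding maps via \cite{BNT}, traversals of bounded multiplicity, two-sided length estimates for the base maps), and the final reduction --- trading fixed constants for an arbitrary $\varrho$ by composing with $f_k$, $f_l$, including the parity observation that $k,l$ must be odd to force $\varphi(1)=b$ and $\psi(b)=1$ --- is correct. But the two claims on which everything rests are asserted rather than proved, and they \emph{are} the theorem. First, the compatibility of the tours: to obtain a map $I\to X=\invlim(X_n,f_n)$ you need $f_n\circ\gamma_{n+1}=\gamma_n$ up to monotone reparametrization, and Euler paths cannot be chosen level by level independently. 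Under a monotone bonding map the preimage of a point $x\in X_n$ can be a subtree $T\subseteq X_{n+1}$ of arbitrarily high valence; a lift of $\gamma_n$ must cover $T$ entirely during the at most two visits that $\gamma_n$ pays to $x$, with entry and exit edges prescribed by $\gamma_n$, and simple examples (an arc $T$ with attaching edges at both of its ends, the pairing forcing one passage to enter and leave at each end) show that the multiplicity of a lifted tour can be forced above $2$. So the multiplicity bound, the Lipschitz constant and the constant $c$ in $\lengthd{d}{\varphi_{X,a,b}(J)}\ge c\abs{J}$ can degrade from level to level, and keeping them uniform is the heart of the matter, not a technicality; note also that lower bounds on $\HHh^1$ do not pass to limits of sets (Go\l{}ab-type semicontinuity goes the wrong way), so ``these bounds persist in the limit'' additionally requires building $d$ so that every projection $\pi_n$ is $1$-Lipschitz --- yet another joint construction you defer. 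Second, $\psi_{X,a,b}$ is not constructed at all: a ``length-type coordinate read off the traversal'' is not well defined ($\varphi_{X,a,b}$ is far from injective, and first-hitting-time coordinates are discontinuous), and producing a Lipschitz surjection $X\to I$ with $\abs{\psi_{X,a,b}(\varphi_{X,a,b}(J))}\ge c'\abs{J}$ for \emph{every} interval $J$ is a problem of the same depth as producing $\varphi_{X,a,b}$ itself.

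Separately, your argument for (d) contains a concretely false step: an uncountable $\Cut_X(a,b)$ need not ``fill a non-degenerate arc''. Take $X$ to be $[0,1]$ together with, for each complementary gap $(\alpha,\beta)$ of the Cantor set $C$, an attached arc joining $\alpha$ to $\beta$ (lengths summable), and $a=0$, $b=1$: this $X$ is totally regular and $\Cut_X(a,b)=C\setminus\{0,1\}$, uncountable but totally disconnected, so it contains no arc whatsoever. The conclusion $d(a,b)>1/2$ is still attainable, but by a different mechanism: every arc from $a$ to $b$ contains $\Cut_X(a,b)$, so it suffices to construct $d$ with $\lengthd{d}{X}=1$ and $\lengthd{d}{\Cut_X(a,b)}>1/2$ (in the example, remetrize so that $C$ becomes a ``fat'' Cantor set); uncountability of $\Cut_X(a,b)$ is exactly what makes such a remetrization possible, and some condition of this kind is necessary, since when $\Cut_X(a,b)$ carries no length and two essentially disjoint routes from $a$ to $b$ exist (as on the circle) one always has $d(a,b)\le 1/2$. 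Likewise $\psi_{X,a,b}(b)=1$ must come from monotonicity of $\psi_{X,a,b}$ with respect to the cut-point order on $\Cut_X(a,b)$, not from ``orienting the traversal along the geodesic''. So even granting the limit construction, part (d) as you argue it rests on a false statement about the structure of $\Cut_X(a,b)$.
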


\newcommand{\thmMain}{
Keeping the notation from Proposition~\ref{T:MAINa},
for every $\varrho>1$, every non-degenerate totally regular continua $X,X'$ and every points
$a,b\in X$, $a',b'\in X'$
there are a constant $L_\varrho$ (depending only on $\varrho$) and $(\varrho,L_\varrho)$-LEL map
$$f:(X,d_{X,a,b},\CCc_{X,a,b})\to (X',d_{X',a',b'},\CCc_{X',a',b'})$$
with $f(a)=a'$ and, provided $\Cut_X(a,b)$ is uncountable, $f(b)=b'$.
Moreover, $f$ is equal to the composition $\varphi\circ\psi$ of two LEL-maps
$\psi:X\to I$ and $\varphi:I\to X'$.
}

\begin{proposition}[\cite{SpA}]\label{T:MAIN}
\thmMain{}
\end{proposition}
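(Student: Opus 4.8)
The plan is to construct $f$ by factoring through the unit interval, invoking Proposition~\ref{T:MAINa} separately on the source and the target and letting the shared middle object $(I,d_I,\CCc_I)$ glue the two halves together. Given $\varrho>1$, I would first set $\varrho'=\sqrt{\varrho}$ (any $\varrho'>1$ with $(\varrho')^2\ge\varrho$ would serve). Applying Proposition~\ref{T:MAINa} to $X$ at the points $a,b$ supplies the metric $d_{X,a,b}$, the dense system $\CCc_{X,a,b}$, and a $(\varrho',L_{\varrho'})$-LEL map $\psi:(X,d_{X,a,b},\CCc_{X,a,b})\to(I,d_I,\CCc_I)$ normalized so that $\psi(a)=0$; applying it to $X'$ at $a',b'$ supplies $d_{X',a',b'}$, $\CCc_{X',a',b'}$, and a $(\varrho',L_{\varrho'})$-LEL map $\varphi:(I,d_I,\CCc_I)\to(X',d_{X',a',b'},\CCc_{X',a',b'})$ with $\varphi(0)=a'$ and $\varphi(1)=b'$. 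I would then define $f=\varphi\circ\psi$, which immediately yields the ``moreover'' clause.

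Next I would check that $f$ is $(\varrho,L_\varrho)$-LEL with $L_\varrho:=L_{\varrho'}^2$, a quantity depending only on $\varrho$. The Lipschitz bound is routine, $\Lip(f)\le\Lip(\varphi)\,\Lip(\psi)\le L_{\varrho'}^2$. For the membership requirement, given $C\in\CCc_{X,a,b}$ the length-expanding property of $\psi$ gives $D:=\psi(C)\in\CCc_I$, and then that of $\varphi$ gives $f(C)=\varphi(D)\in\CCc_{X',a',b'}$, so $f$ maps $\CCc_{X,a,b}$ into $\CCc_{X',a',b'}$ as required.

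The only delicate step, and the one I would single out as the crux, is verifying the expansion inequality~(\ref{EQ:defLengthExpanding}) for the composite. Suppose $f(C)\ne X'$. Because $\varphi$ is surjective, $D=\psi(C)$ cannot be all of $I$ (otherwise $f(C)=\varphi(I)=X'$); thus simultaneously $D\ne I$ and $\varphi(D)\ne X'$, so the expansion estimates for the two factors are both available and chain together as
$$
 \lengthd{d_{X',a',b'}}{f(C)}
 \ge \varrho'\cdot\lengthd{d_I}{D}
 \ge (\varrho')^2\cdot\lengthd{d_{X,a,b}}{C}
 = \varrho\cdot\lengthd{d_{X,a,b}}{C}.
$$
This is precisely where the surjectivity built into the definition of a LEL map does the work: it excludes the degenerate intermediate case $\psi(C)=I$ and lets the two expansivity constants multiply up to $\varrho$.

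Finally, the boundary conditions would follow directly from the endpoint normalizations of Proposition~\ref{T:MAINa}. One always has $f(a)=\varphi(\psi(a))=\varphi(0)=a'$, using $\psi(a)=0$ from the application to $X$ and $\varphi(0)=a'$ from the application to $X'$. If in addition $\Cut_X(a,b)$ is uncountable, clause~(d) applied to $X$ forces $\psi(b)=1$, whence $f(b)=\varphi(1)=b'$ by clause~(c) applied to $X'$. The whole argument is thus essentially a bookkeeping exercise in composing two applications of Proposition~\ref{T:MAINa}, with the sole genuine content being the expansion step above.
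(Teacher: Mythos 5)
Your proof is correct and takes essentially the same approach as the paper: the statement is quoted from \cite{SpA}, and its ``moreover'' clause already encodes your strategy of factoring through $(I,d_I,\CCc_I)$ via two applications of Proposition~\ref{T:MAINa}. The same composition-and-chaining argument---including your key observation that surjectivity of $\varphi$ excludes $\psi(C)=I$ whenever $f(C)\ne X'$, so that the two expansion estimates multiply---is exactly what the paper uses to prove its generalization, Proposition~\ref{T:lipschitzXtoX}.
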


We will need Proposition~\ref{T:lipschitzXtoX}, which is a simple generalization
of Proposition~\ref{T:MAIN}.
Before giving the formulation of it we need to introduce the following notation.
If $p\ge 1$, $X_i$ ($i=1,\dots,p$) are non-degenerate subcontinua of $X$ and $a_i$ ($i=0,\dots,p$) are points of $X$ such that
$a_0\in X_1, a_p\in X_p$, $a_i\in X_i\cap X_{i+1}$ for $1\le i<p$
and $\CCc_i=\CCc_{X_i,a_{i-1},a_i}$ for $1\le i \le p$,
we write
$$
 (X,d,\CCc) = \coprod_{i=1}^p (X_i,d_i,\CCc_i)
$$
if $X=\bigcup_i X_i$, $d$ is a convex metric on $X$ such that
$d|_{X_i\times X_i}=d_{X_i,a_{i-1},a_i}$ for $i=1,\dots,p$
and $\CCc$ is the system of all unions
$C_i\cup X_{i+1}\cup \dots\cup X_{j-1}\cup C_j$
where $i\le j$, $C_i\in\CCc_{i}$, $C_j\in\CCc_{j}$,
 and, if $i<j$, $C_i=\varphi_{X_i,a_{i-1},a_i}([s,1])$ and
 $C_j=\varphi_{X_j,a_{j-1},a_j}([0,t])$ for some $s,t\in\III$.
Particularly, if $p=1$ then $d=d_1$ and $\CCc=\CCc_{X,a_0,a_1}$.

\begin{proposition}\label{T:lipschitzXtoX}
 Let $X,X'$ be non-degenerate totally regular continua.
 Let $a,b\in X$ and $d=d_{X,a,b}$, $\CCc=\CCc_{X,a,b}$.
 Let $a'_0, a'_1, \dots, a'_p$ be points of $X'$ such that
 $$
  (X',d',\CCc') = \coprod_{i=1}^p (X_i',d_{X_i',a_{i-1}',a_i'},\CCc_{X_i',a_{i-1}',a_i'}).
 $$
 Then for every $\varrho>1$ there is a map
 $f:X\to X'$ such that:
\begin{enumerate}
  \item[(a1)] $f:(X,d)\to (X', d')$ is a Lipschitz-$\tilde{L}$ surjection,
    where $\tilde{L}=\tilde{L}_{p,\varrho}$ depends only on $p$ and $\varrho$;
  \item[(a2)] for every $C\in\CCc$ we have
      $f(C)\in \CCc'$ and
     $$
      \text{if} \quad
      f(C)\not\supseteq X'_i \quad\text{for every }i
      \qquad     \text{then}    \quad
      \lengthd{d'}{f(C)}\ge\rho\cdot\lengthd{d}{C};
     $$
  \item[(b)] $f$ is the composition $\varphi\circ \psi$ of two Lipschitz-$\tilde{L}$ surjections
      $\psi:(X,d)\to \III$ and $\varphi:\III\to (X', d')$
      such that $\psi(a)=0$, $\varphi(0)=a_0'$, $\varphi(1)=a_p'$ and,
      provided $\Cut_X(a,b)$ is uncountable, $\psi(b)=1$.
\end{enumerate}
Notice that, by (b), $f(a)=a'_0$ and, provided $\Cut_X(a,b)$ is uncountable, $f(b)=a'_p$;
\end{proposition}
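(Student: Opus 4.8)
The plan is to build $f$ as a composition $f=\varphi\circ\psi$, where $\psi\colon X\to\III$ is supplied directly by Proposition~\ref{T:MAINa} and $\varphi\colon\III\to X'$ is assembled from the maps that Proposition~\ref{T:MAINa} produces for the individual links $X_1',\dots,X_p'$ of the chain $X'=\coprod_{i=1}^p(X_i',d_{X_i',a_{i-1}',a_i'},\CCc_{X_i',a_{i-1}',a_i'})$. Fixing $\varrho>1$, I would first apply Proposition~\ref{T:MAINa}(c)--(d) to $(X,a,b)$ to get a $(\varrho,L_\varrho)$-LEL surjection $\psi\colon(X,d,\CCc)\to(\III,d_\III,\CCc_\III)$ with $\psi(a)=0$ and, when $\Cut_X(a,b)$ is uncountable, $\psi(b)=1$. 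For each $i$ I would apply the same proposition to $(X_i',a_{i-1}',a_i')$ to obtain a $(\varrho,L_\varrho)$-LEL surjection $\varphi_i\colon(\III,d_\III,\CCc_\III)\to(X_i',d_{X_i',a_{i-1}',a_i'},\CCc_{X_i',a_{i-1}',a_i'})$ with $\varphi_i(0)=a_{i-1}'$, $\varphi_i(1)=a_i'$ and $\varphi_i=\varphi_{X_i',a_{i-1}',a_i'}\circ f_{k_i}$. Writing $\III_i=[(i-1)/p,\,i/p]$, I then glue the rescaled pieces, setting $\varphi(t)=\varphi_i(pt-(i-1))$ for $t\in\III_i$. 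The values match at the junctions because $\varphi_i(1)=a_i'=\varphi_{i+1}(0)$, so $\varphi$ is a continuous surjection onto $X'=\bigcup_i X_i'$ with $\varphi(0)=a_0'$ and $\varphi(1)=a_p'$, and I put $f=\varphi\circ\psi$.

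Properties (a1) and (b) are then essentially immediate. Each restriction $\varphi|_{\III_i}$ is Lipschitz-$pL_\varrho$ (the affine rescaling inflates the constant by $p$) and $\{\III_i\}$ is a splitting of $(\III,d_\III)$, so Lemma~\ref{L:pwLip} makes $\varphi$ Lipschitz-$pL_\varrho$; since $\psi$ is Lipschitz-$L_\varrho$, the composite $f$ is Lipschitz-$pL_\varrho^2$. Taking $\tilde L=\tilde L_{p,\varrho}=pL_\varrho^2$, which depends only on $p,\varrho$ and, as $L_\varrho>1$, dominates both $L_\varrho$ and $pL_\varrho$, makes $\psi$, $\varphi$ and $f$ all Lipschitz-$\tilde L$. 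The boundary values recorded above give (b), and hence $f(a)=a_0'$ and, in the uncountable case, $f(b)=a_p'$.

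The real work is (a2). For $C\in\CCc$ put $J=\psi(C)$, a closed subinterval of $\III$, so that $f(C)=\varphi(J)$; for each link $\III_i$ that $J$ meets let $J_i\subseteq\III$ be the rescaling of $J\cap\III_i$, so $\varphi(J\cap\III_i)=\varphi_i(J_i)$ and $\abs{J_i}=p\,\abs{J\cap\III_i}$. If $J$ meets $\III_i,\dots,\III_j$, each fully covered interior link contributes all of $X_l'$, while on the two extreme links the image is $\varphi_i(J_i)=\varphi_{X_i',a_{i-1}',a_i'}(f_{k_i}(J_i))$. From $\varphi_i(1)=a_i'=\varphi_{X_i',a_{i-1}',a_i'}(1)$ and $f_{k_i}(0)=0$ one sees that the extreme linear branches of $f_{k_i}$ are increasing, so $f_{k_i}$ carries a left-end interval $J_i=[\alpha,1]$ onto some $[s,1]$ and a right-end interval $J_j=[0,\beta]$ onto some $[0,t]$; hence $f(C)=\varphi_{X_i',\dots}([s,1])\cup X_{i+1}'\cup\dots\cup X_{j-1}'\cup\varphi_{X_j',\dots}([0,t])\in\CCc'$ (degenerating, when $i=j$, to a single $\varphi_{X_i',\dots}(\text{subinterval})\in\CCc_{X_i',a_{i-1}',a_i'}$). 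Now assume $f(C)\not\supseteq X_i'$ for every $i$; then $J$ contains no whole link and $\varphi_i(J_i)\ne X_i'$ for each relevant $i$, so the length-expansion of each $\varphi_i$ gives $\lengthd{d'}{\varphi_i(J_i)}\ge\varrho\,\abs{J_i}=\varrho\,p\,\abs{J\cap\III_i}$. As $\{X_1',\dots,X_p'\}$ is a splitting, $\HHh^1$ is additive over the links, whence
\[
 \lengthd{d'}{f(C)}=\sum_i\lengthd{d'}{\varphi_i(J_i)}\ge\varrho\,p\sum_i\abs{J\cap\III_i}=\varrho\,p\,\abs{J}.
\]
Finally $f(C)\ne X'$ forces $\psi(C)=J\ne\III$, so the length-expansion of $\psi$ yields $\abs{J}\ge\varrho\,\lengthd{d}{C}$, and therefore $\lengthd{d'}{f(C)}\ge\varrho^2 p\,\lengthd{d}{C}\ge\varrho\,\lengthd{d}{C}$, as required.

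I expect the main obstacle to be precisely the bookkeeping of this last paragraph: verifying that the concatenation $\varphi$ sends every $C\in\CCc$ into the rather rigid family $\CCc'$, which forces the two extreme links to appear exactly as $[s,1]$- and $[0,t]$-pieces (and so pins down the behaviour of $f_{k_i}$ at $0$ and $1$), together with threading the expansion constant through both LEL maps so that the composite factor $\varrho^2 p$ still dominates the prescribed $\varrho$ while $\tilde L$ remains a function of $p$ and $\varrho$ alone.
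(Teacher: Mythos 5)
Your proposal follows essentially the same route as the paper's proof: the same composition $f=\varphi\circ\psi$ with $\psi$ and the link maps $\varphi_i$ supplied by Propositions~\ref{T:MAINa}/\ref{T:MAIN}, the same gluing of the rescaled pieces $\varphi_i\circ g_i$ over the intervals $[(i-1)/p,\,i/p]$ via Lemma~\ref{L:pwLip}, the same constant $\tilde L=pL_\varrho^2$, and the same reduction of (a2) to the interval $J=\psi(C)$ (the paper even dismisses the $f(C)\in\CCc'$ bookkeeping, which you carry out in more detail, as immediate from the definitions). The one genuine divergence is the final length estimate: you sum the lengths of the images over the (at most two) extreme links, justifying additivity of $\HHh^1$ by asserting that $\{X_1',\dots,X_p'\}$ ``is a splitting.'' That is not among the hypotheses: the definition of $\coprod_{i=1}^p(X_i',d_i',\CCc_i')$ only requires $X'=\bigcup_i X_i'$ and that the convex metric $d'$ restrict correctly on each link; it does not force the pairwise intersections $X_i'\cap X_j'$ to be finite or even of $\HHh^1$-measure zero (in the paper's applications they are singletons, but the proposition is stated in the general setting), so the equality $\lengthd{d'}{f(C)}=\sum_i\lengthd{d'}{\varphi_i(J_i)}$ is unjustified as written. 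The paper sidesteps this by assuming without loss of generality $\varrho\ge 2$ and keeping only the larger of the two pieces, $\abs{L_i}\ge\abs{L}/2$, which yields $\lengthd{d'}{f(C)}\ge\lengthd{d'}{\varphi_i'\circ g_i(L_i)}\ge\varrho p\,\abs{L_i}\ge\tfrac{\varrho^2p}{2}\lengthd{d}{C}\ge\varrho\,\lengthd{d}{C}$. Your argument is repaired the same way, and in fact without the normalization $\varrho\ge2$: an overlap can only matter when $J$ meets two links, which forces $p\ge 2$, and then the maximum alone already gives the factor $\varrho^2p/2\ge\varrho^2>\varrho$; when $J$ meets a single link the sum and the maximum coincide. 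So the flaw is local and easily patched, not a defect of the approach.
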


\begin{proof}
We may assume that $\varrho\ge 2$.
For $i=1,\dots,p$ put $d_i'=d'_{X_i',a_{i-1}',a_i'}$ and $\CCc_i'=\CCc'_{X_i',a_{i-1}',a_i'}$.
Let $L_\varrho$ be as in Proposition~\ref{T:MAIN} and let $\psi:(X,d,\CCc)\to (I,d_I,\CCc_I)$,
$\varphi_i'=\varphi_{X_i',a_{i-1}',a_i'}\circ f_{k_i}:(I,d_I,\CCc_I)\to (X_i',d_i',\CCc_i')$
($i=1,\dots,p$) be $(\varrho,L_\varrho)$-LEL maps
with $\varphi_i'(0)=a_{i-1}'$, $\varphi_i'(1)=a_{i}'$, $\psi(a)=0$ and, provided
$\Cut_X(a,b)$ is uncountable, also $\psi(b)=1$.

For $i=1,\dots,p$ let $I_i = [ (i-1)/p,\, i/p \,]$ and
$g_i:I_i\to I$ be the increasing linear surjection.
Define $f:X\to X'$ by
$f = \varphi' \circ \psi$, where
$\varphi':I\to X'$ is given by $\varphi'|_{I_i}=\varphi'_i \circ g_i$ for $i=1,\dots,p$.
Then $\varphi'(i/p)=a_i'$ for $0\le i\le p$, $\varphi'$ is continuous and Lipschitz with $\Lip(\varphi')\le pL_\varrho$
by Lemma~\ref{L:pwLip}.
Since $\Lip(f) \le p L_\varrho^2$, $f$ satisfies (a1) and (b) with $\tilde{L}=p L_\varrho^2$.

 To show that $f$ satisfies also (a2) fix any $C\in\CCc$ and put $L=\psi(C)$
 and $L_i=L\cap I_i$ for $i=1,\dots,p$;
 notice that $L$ and nonempty $L_i$'s are closed intervals.
 The fact that $f(C)=\varphi'(L)\in \CCc'$ immediately follows from
 the definitions of $f$ and $\CCc'$.
 If $L$ contains some $I_i$ then $f(C) \supseteq X_i'$. Otherwise
 $\abs{L}\ge\varrho\cdot \lengthd{d}{C}$ since $L\ne I$ and
 there
 is $i$ such that $L=L_i\cup L_{i+1}$. We may assume that $\abs{L_i}\ge \abs{L}/2$.
 Then
 $$
  \lengthd{d'}{f(C)}
  \ge \lengthd{d'}{\varphi'_i \circ g_i (L_i)}
  \ge \varrho p \cdot \abs{L_i}
  \ge \frac{\varrho^2 p}{2}\cdot \lengthd{d}{C}
  \ge \varrho\cdot\lengthd{d}{C}
 $$
 since $\varrho\ge 2$. So $f$ satisfies also (a2) and the proof is finished.
\end{proof}


\section{$P$-Lipschitz maps and an entropy bound}\label{S:entropyBound}
In this section we introduce a special class of maps called
$P$-Lipschitz maps (do not confuse with Lipschitz-$L$ maps)
and we show a relatively easy way of obtaining an
upper bound for their entropy (see
Proposition~\ref{P:entropyOfPLipschitz}), which will be used in Section~\ref{S:applications}.
For the definition of a splitting see Section~\ref{SS:continua}.

\begin{definition}[$P$-Lipschitz maps]\label{D:generalizedMarkovMap}
 Let $X$ be a non-degenerate continuum and $f:X\to X$ be a continuous map. Let $P$ be a finite $f$-invariant
 subset of $X$, $\AAa$ be a splitting of $X$ with $P_\AAa\subseteq P$
 and $(L_A)_{A\in\AAa}$ be positive constants.
 Then we say that $f$ is a \emph{$P$-Lipschitz map (w.r.t.~the splitting $\AAa$ and the constants $(L_A)_{A\in\AAa}$)}
 if, for every $A\in\AAa$, $f(A)$ is non-degenerate and $\Lip(f|_A)\le L_A$.
\end{definition}
Roughly speaking, $P$-Lipschitz maps are ``piecewise-Lipschitz''
maps with the ``pieces'' being subcontinua intersecting only in a
finite invariant subset $P$. Trivially, every non-constant Lipschitz
map is $P$-Lipschitz with $P=\emptyset$ and $\AAa=\{X\}$. On the
other hand, if the metric $d$ of $X$ is convex then every
$P$-Lipschitz map is Lipschitz with $\Lip(f)\le\max_{A\in\AAa} L_A$
(see Lemma~\ref{L:pwLip}).

Let $f$ be a $P$-Lipschitz map w.r.t.~$\AAa$.
For $A,B\in\AAa$ we write $A\to B$ or, more
precisely, $A\toPar{f} B$ provided $f(A)$ intersects the interior of
$B$. So if $A\to B$ then $f(A)\cap B$ is a finite union of
subcontinua of $X$ at least one of which is non-degenerate; the
number of components of $f(A)\cap B$ is smaller than or equal to the
cardinality of $\partial B$. Since every $f(A)$ is
non-degenerate, for every $A\in\AAa$ there is $B\in\AAa$ with $A\to
B$ and, provided $f$ is surjective, also $C\in\AAa$ with $C\to A$.

The \emph{$P$-transition graph $G_f$ of $f$} is the directed
graph the vertices of which
are the sets $A\in\AAa$ and the edges of which correspond to $A\to B$.
The corresponding $01$-transition matrix will be called the
\emph{$P$-transition matrix} and
will be denoted by $M_f$.
For an integer $n\ge 1$ put
$$
 \AAa^n = \{\AAA=(A_0,A_1,\dots, A_{n-1}):\
 A_i\in \AAa,\ A_0\to A_1\to \dots\to A_{n-1}
 \}.
$$
So $\AAa^n$ is the set of all paths of length $n-1$ in $G_f$.

\begin{lemma}\label{L:MarkovMapTrajectories}
 Let $f$ be a $P$-Lipschitz map w.r.t.~$\AAa$ and  let $x\in X$.
 Then for every integer $n\ge 1$ there is $\AAA=(A_0,A_1,\dots, A_{n-1})\in\AAa^n$ such that
 $$
  f^i(x)\in A_i
  \qquad
  \text{for every } i=0,1,\dots, n-1.
 $$
\end{lemma}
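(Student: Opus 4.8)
The plan is to build the path $\AAA$ step by step, choosing $A_0,A_1,\dots$ one at a time so that $f^i(x)\in A_i$ and each consecutive pair satisfies the transition relation $A_i\toPar{f}A_{i+1}$. Since $\AAa$ covers $X$, we may start by picking any $A_0\in\AAa$ with $x\in A_0$. The whole lemma then reduces, by an obvious induction on $i$, to the following one-step claim: \emph{given $A\in\AAa$ and a point $p\in f(A)$, there is $B\in\AAa$ with $p\in B$ and $A\toPar{f}B$}; applying it with $A=A_i$ and $p=f^{i+1}(x)\in f(A_i)$ furnishes $A_{i+1}$.

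The main obstacle is exactly the case when $p$ lies on the finite boundary set $P_\AAa$: then $p$ belongs to several members of $\AAa$ but to the interior of none of them, so membership $p\in B$ alone does not guarantee that $f(A)$ meets $\interior{B}$, which is what $A\toPar{f}B$ requires. To overcome this I will exploit that, by the definition of a $P$-Lipschitz map, $f(A)$ is a \emph{non-degenerate subcontinuum}: being connected with more than one point it has no isolated points, so every neighborhood of $p$ contains infinitely many points of $f(A)$.

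The key step is then the following local argument. Let $\AAa_p=\{B\in\AAa:\ p\in B\}$. The union of the remaining members of $\AAa$ is closed and avoids $p$, so there is an open neighborhood $U$ of $p$ contained in $\bigcup_{B\in\AAa_p}B$. Since $f(A)\cap U$ is infinite while $P_\AAa$ is finite, we can pick $q\in (f(A)\cap U)\setminus P_\AAa$. As $q\notin P_\AAa$, it lies in the interior of a unique member of $\AAa$ (recall $\interior{X_j}\supseteq X_j\setminus P_\AAa$ and the interiors are pairwise disjoint); as $q\in U$, that member must belong to $\AAa_p$. Calling it $B$, we obtain $p\in B$ and $q\in f(A)\cap\interior{B}$, i.e.\ $A\toPar{f}B$, which proves the claim. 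When $p\notin P_\AAa$ one may simply take $B$ to be the unique piece with $p\in\interior{B}$; the argument above subsumes this easy case.

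Assembling these one-step choices along $i=0,1,\dots,n-1$ yields the desired path $\AAA=(A_0,\dots,A_{n-1})\in\AAa^n$ with $f^i(x)\in A_i$ for all $i$, and no serious computation is involved beyond the elementary point-set topology just described.
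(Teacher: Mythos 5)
Your proof is correct and follows essentially the same route as the paper: a step-by-step (inductive) extension of the path, where the only delicate case is when $f^{i+1}(x)$ lies in the finite boundary set, resolved by observing that $f(A_i)$ is a non-degenerate continuum, hence has no isolated points, and so must meet the interior of some member of $\AAa$ containing that point. Your write-up merely makes explicit (the neighborhood $U$ and the choice of $q\notin P_\AAa$) what the paper leaves as a one-line remark.
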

\begin{proof}
 We prove the lemma by induction. If $n=1$ then the assertion follows from the fact
 that $\AAa=\AAa^1$ covers $X$. Assume now that the assertion of the lemma is true for some $n\ge 1$;
 we are going to show that it is true also for $n+1$.
 To this end take any point $x\in X$; we want to find $\AAA\in\AAa^{n+1}$ with
 $f^i(x)\in A_i$ for $i\le n$. By the induction hypothesis there is
 $(A_0,A_1,\dots, A_{n-1})\in\AAa^n$ such that $f^i(x)\in A_i$ for every $i\le n-1$.
 Put $y=f^{n-1}(x)\in A_{n-1}$. If $f(y)\not\in P$ then $f(y)\in\interior{A_n}$ for some $A_n\in\AAa$;
 hence $A_{n-1}\to A_n$ and we can put
  $\AAA=(A_0,A_1,\dots, A_{n-1},A_n)\in\AAa^{n+1}$.
 Now assume that $f(y)\in P$. Let $B_1,\dots, B_{k}$ ($k\ge 1$) be the collection
 of all sets from $\AAa$ containing $f(y)$.
 Since $f(A_{n-1})$ has no isolated point (it is a non-degenerate continuum)
 and contains $f(y)$, it must intersect the interior of some $B_i$. If we put
 $\AAA=(A_0,A_1,\dots, A_{n-1},B_i)$, we are done.
\end{proof}

Let $X$ be a continuum with a convex metric $d$ and let
$f$ be a $P$-Lipschitz map on $X$ w.r.t.~$\AAa$ and
$(L_A)_{A\in\AAa}$. For every nonempty subset $\BBb$ of $\AAa$ put
$$
 L_{\BBb} = \max_{A\in\BBb} L_A.
$$
Since any $P$-Lipschitz map $f$ has $\Lip(f)\le L_{\AAa}$, we immediately
have (see e.g.~\cite[Theorem~3.2.9]{Katok})
$$
 h(f) \le \dimUpper(X)\cdot \log^+ L_\AAa
$$
where $\log^+ L=\max\{\log L, 0\}$.

Assume now that
$X$ is a non-degenerate totally regular continuum and the metric $d$ of $X$
is convex with $\lengthd{d}{X}<\infty$.
Then $\dimUpper(X)=1$ (see Lemma~\ref{L:separatedSetsInContinuaOfFiniteLength}),
hence $h(f)\le \log^+ L_\AAa$.
But this upper bound is often too pessimistic. Consider e.g.~the following
example. Let $f$ and $\AAa=\{X_0,\dots,X_{k-1}\}$ be such that
$f(X_i)\subseteq X_{(i+1) \operatorname{mod} k}$ and that
$L_{X_0}=L>1$, $L_{X_i}\le 1$ for $i\ge 1$. Then $L_\AAa=L$ and we have
$h(f)\le \log L$. But since $X_i$'s are $f^k$-invariant and $\Lip(f^k|_{X_i})\le L$,
we have a much better estimate $h(f)\le (1/k) \log L$.
The key point here is the fact that the sets $A\in\AAa$ with
``large'' $L_A$ occurs ``rarely'' in paths $\AAA\in\AAa^n$ of $f$.

To formalize this idea, for
nonempty $\BBb\subseteq \AAa$ put
\begin{equation}\label{EQ:thetaB}
  \theta_\BBb
  =\limsup\limits_{n\to\infty} \frac{k_n^{\BBb}}{n}
  \qquad
  \text{where}
  \quad
  k^{\BBb}_n
  =\max\limits_{\AAA\in \AAa^{n}} \card{ \{k=0,\dots, n-1: A_k\not\in\BBb}\} \,.
\end{equation}
This quantity measures the maximal ``asymptotic frequency'' of occurrences of
$A\in \AAa\setminus\BBb$ in paths of $f$. The following proposition
asserts that if $L_\BBb$ is close to $1$ and $\theta_\BBb$ is sufficiently small,
then the entropy of $f$ can be small even for  large $L_\AAa$.

\begin{proposition}[Upper bound for the entropy of $P$-Lipschitz maps]\label{P:entropyOfPLipschitz}
 Let $X$ be a non-degenerate totally regular continuum endowed with a convex metric $d$
 such that $\lengthd{d}{X}<\infty$.
 Let $f:X\to X$ be $P$-Lipschitz w.r.t.~$\AAa$ and $(L_A)_{A\in\AAa}$.
 Then, for every nonempty subsystem $\BBb$ of $\AAa$,
 $$
  h(f) \le \log^+ L_\BBb + 2\theta_\BBb \log^+ L_\AAa.
 $$
\end{proposition}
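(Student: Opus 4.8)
The plan is to bound the entropy by the exponential growth rate of the one-dimensional Hausdorff measure of $X$ measured in the Bowen metrics $d_n$, and then to control that growth by cutting $X$ into the itinerary pieces provided by Lemma~\ref{L:MarkovMapTrajectories}.

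\emph{Reduction to a length estimate.} By the very definition of $(n,\eps)$-separated sets, a subset of $X$ is $(n,\eps)$-separated if and only if it is $\eps$-separated in $(X,d_n)$, so $s_n(X,\eps,f)=s((X,d_n),\eps)$. Since $f$ is Lipschitz-$L_\AAa$ (Lemma~\ref{L:pwLip}), each $f^i$ is Lipschitz-$L_\AAa^{\,i}$, hence $d_n\le\max\{1,L_\AAa\}^{\,n-1}\,d$; thus $(X,d_n)$ is again a totally regular continuum of finite length and Lemma~\ref{L:separatedSetsInContinuaOfFiniteLength} gives $s((X,d_n),\eps)\le 2\,\lengthd{d_n}{X}/\eps$ for all small $\eps$. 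Substituting into (\ref{EQ:BowenDefEntropy}) and letting $\eps\to0$ makes the multiplicative constants disappear, so it remains to prove
\[
 \limsup_{n\to\infty}\tfrac1n\log\lengthd{d_n}{X}\le\log^+L_\BBb+\theta_\BBb\log^+L_\AAa,
\]
which is even sharper than, and hence implies, the asserted bound (so the factor $2$ is not actually needed).

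\emph{Itinerary decomposition.} For $\AAA=(A_0,\dots,A_{n-1})\in\AAa^n$ put $X_\AAA=\bigcap_{i<n}f^{-i}(A_i)$; by Lemma~\ref{L:MarkovMapTrajectories} these closed sets cover $X$. I will \emph{disjointify} them into a finite Borel partition $\{X_\AAA'\}$ with $X_\AAA'\subseteq X_\AAA$. Because $\HHh^1_{d_n}$ is a Borel measure, $\lengthd{d_n}{X}=\sum_\AAA\lengthd{d_n}{X_\AAA'}$. Passing to a genuine partition (rather than summing over the overlapping closed cover) is the decisive point: the tubes $X_\AAA$ overlap along the orbit of the finite set $P$, and if $f$ eventually collapses an arc into $P$ that arc lies in exponentially many tubes, so naive subadditivity of $\HHh^1_{d_n}$ over the closed cover would grossly overcount and destroy the rate.

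\emph{Per-tube distortion and conclusion.} Fix $\AAA$ and $x,y\in X_\AAA$. Since $f^j(x),f^j(y)\in A_j$ with $\Lip(f|_{A_j})\le L_{A_j}$, induction gives $d(f^i(x),f^i(y))\le\bigl(\prod_{j<i}L_{A_j}\bigr)d(x,y)$, so $d_n\le C_\AAA\,d$ on $X_\AAA$ with $C_\AAA=\max_{0\le i<n}\prod_{j<i}L_{A_j}$; as Lipschitz maps scale $\HHh^1$ by at most their constant (applied to $\id\colon(X_\AAA',d)\to(X_\AAA',d_n)$), we get $\lengthd{d_n}{X_\AAA'}\le C_\AAA\length{X_\AAA'}$. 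Splitting the factors of $C_\AAA$ by membership in $\BBb$ and bounding the number of indices with $A_j\notin\BBb$ by $k_n^\BBb$ yields $\log C_\AAA\le n\log^+L_\BBb+k_n^\BBb\log^+L_\AAa=:\log C_n$, uniformly in $\AAA$. Here the gain over the useless global bound $d_n\le L_\AAa^{\,n-1}d$ is exactly that, inside a single tube, all but at most $k_n^\BBb$ steps are controlled by $L_\BBb$. Summing over the partition, $\lengthd{d_n}{X}\le C_n\sum_\AAA\length{X_\AAA'}=C_n\length{X}$, whence $\tfrac1n\log\lengthd{d_n}{X}\le\tfrac1n\log C_n+\tfrac1n\log\length{X}$, and taking $\limsup_n$ (using $\limsup_n k_n^\BBb/n=\theta_\BBb$ and $\log^+L_\AAa\ge0$) gives the displayed bound. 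The main obstacle is the decomposition step: one must combine the partition with the \emph{per-tube} distortion constant $C_\AAA$, since neither the overlapping closed cover nor the global Lipschitz constant suffices.
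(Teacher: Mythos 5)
Your proof is correct and in fact arrives at a sharper bound (with $\theta_\BBb$ in place of $2\theta_\BBb$) by a genuinely different route. The paper argues pairwise: for $x\ne y$ in an $(n,\eps)$-separated set it tracks $d(f^i(x),f^i(y))$ along the two (generally different) itineraries supplied by Lemma~\ref{L:MarkovMapTrajectories}, and uses convexity of $d$ --- a geodesic arc from $f^i(x)$ to $f^i(y)$, split at its unique crossing point with $P$ --- to obtain the one-step growth factor $\max\{L_{A_i^x},L_{A_i^y}\}$; this shows the set is $\eps/(L_\AAa^{2k_n^\BBb}L_\BBb^{n})$-separated in the \emph{original} metric, after which Lemma~\ref{L:separatedSetsInContinuaOfFiniteLength} is applied in $(X,d)$. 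The factor $2$ is exactly the price of the two points having independent itineraries, each contributing up to $k_n^\BBb$ bad symbols. You avoid pairs altogether: you bound $\lengthd{d_n}{X}$ via a Borel partition into itinerary tubes together with the per-tube comparison $d_n\le C_\AAA\,d$, and apply Lemma~\ref{L:separatedSetsInContinuaOfFiniteLength} in the Bowen metric $(X,d_n)$; inside one tube both points share a single itinerary, so only $k_n^\BBb$ bad symbols ever occur, which is precisely where the factor $2$ disappears. Your insistence on disjointifying (rather than invoking subadditivity over the overlapping closed cover, which can overcount exponentially along $\bigcup_{i<n}f^{-i}(P)$ when $f$ collapses continua into $P$) is well placed, and the additivity of $\HHh^1_{d_n}$ on disjoint Borel sets that you need is standard for metric outer measures. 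As for what each approach buys: yours gives the sharper constant and never uses geodesics --- convexity enters only through the inessential appeal to Lemma~\ref{L:pwLip}, since $\lengthd{d_n}{X}<\infty$ is reproved by your tube estimate and Lemma~\ref{L:separatedSetsInContinuaOfFiniteLength} is vacuous when the length is infinite --- so your argument would survive dropping convexity of $d$ from the hypotheses; the paper's argument stays entirely at the level of point separation and needs nothing about $\HHh^1$ beyond the length-versus-separated-sets lemma, at the cost of the extra factor $2$.
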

\begin{proof}
 Since the assertion of the proposition does not change if we replace every $L_A$
 by $\max\{L_A,1\}$, we may assume that $L_A\ge 1$ for every $A\in\AAa$.
 Fix any $0<\eps < d(X)$ such that
  $\eps<\dist(A,B)$ for every disjoint $A,B\in\AAa$
  and $\eps< d(x,y)$ for every distinct $x,y\in P$.

 Let $n\in\NNN$ and let $S$ be an $(n,\eps)$-separated subset of $X$ of maximal cardinality
 $\card{S}=s_n(X,\eps,f)$.
 Take any $x\ne y$ from $S$. By Lemma~\ref{L:MarkovMapTrajectories}
 there are $\AAA^x=(A_i^x)_{i<n}, \AAA^y=(A_i^y)_{i<n}\in\AAa^n$
 such that $x_i = f^i(x)\in A_i^x$ and $y_i = f^i(y)\in A_i^y$ for $i<n$.
 Put $d_i=d(x_i,y_i)$ for $i<n$ and
 $j = \min\{i: d_i>\eps\}$; then $0\le j\le n-1$.

 Fix any $0\le i< j$. Let $B$ be a geodesic arc from $x_i$ to $y_i$.
 Then, by (\ref{EQ:length-diam}), $d(B)\le \length{B}=d(x_i,y_i)\le \eps$. By the
 choice of $\eps$ the set $B$ contains at most one point from $P$.
 If $B\cap P=\emptyset$ then $B$ does not intersect the boundaries of $A_i^x$, $A_i^y$
 and so $B\subseteq \interior{A_i^x} \cap \interior{A_i^y}$.
 Hence $A_i^x = A_i^y$ and
 \begin{equation}\label{EQ:PLip1}
  d_{i+1}\le L_{A_i^x} \cdot d_i.
 \end{equation}
 Otherwise $B\cap P=\{z\}$ is a singleton. We claim that $z\in A_i^x$. Indeed,
 if $z\ne x_i$ then
 $x_i$ belongs to the interior of $A_i^x$.
 Since $B$ is connected, intersects the interior of ${A_i^x}$
 and $\partial A_i^x\subseteq P$,
 the subarc $x_i z$ of $B$ is contained
 in $A_i^x$ and so $z\in A_i^x$.
 If $z=x_i$ then again $z\in A_i^x$.
 Analogously $z\in A_i^y$ and thus $z\in A_i^x\cap A_i^y$.
 Using the facts that $f|_A$ is Lipschitz on every $A\in\AAa$
 and that $B$ is geodesic (hence $d(x_i,z)+d(z,y_i)=d_i$)
 we obtain
 \begin{equation}\label{EQ:PLip2}
 \begin{split}
  d_{i+1}
  &\le d(x_{i+1},f(z)) + d(f(z), y_{i+1})
  \le L_{A_i^x} \cdot d(x_i,z) +  L_{A_i^y} \cdot d(z,y_i)
 \\
  &\le \max\{L_{A_i^x}, L_{A_i^y}\} \cdot d_i \,.
 \end{split}
 \end{equation}
 Combination of (\ref{EQ:PLip1}) and (\ref{EQ:PLip2}) for $i<j$
 gives
 $$
  d_{j}
  \le d_0 \cdot \prod\limits_{i<j} \max\{L_{A_i^x}, L_{A_i^y}\}
  \le d_0 \cdot \prod\limits_{i<n} \max\{L_{A_i^x}, L_{A_i^y}\} \,.
 $$
 Thus, by the definition (\ref{EQ:thetaB}) of $k_n=k_n^\BBb$,
 $$
  \eps
  < d_{j}
  \le d_0 \cdot L_{\AAa}^{2k_n} \cdot L_{\BBb}^{n-2k_n}
  \le d_0 \cdot L_{\AAa}^{2k_n} \cdot L_{\BBb}^{n} \,.
 $$
 So the set $S$ is $\eps'$-separated with $\eps'=\eps/ (L_{\AAa}^{2k_n} \cdot L_{\BBb}^{n})$
 and Lemma~\ref{L:separatedSetsInContinuaOfFiniteLength} gives
 $$
  s_n(X,\eps,f)
  \le
  s(X,\eps')
  \le \frac{2 \length{X}}{\eps} \cdot L_{\AAa}^{2k_n} \cdot L_{\BBb}^{n} \,.
 $$
 From this and (\ref{EQ:BowenDefEntropy}),
 $h(f) \le 2\theta_\BBb \log L_\AAa + \log L_\BBb$.
\end{proof}

In the following example we show how Proposition~\ref{P:entropyOfPLipschitz}
can be used to obtain a ``good'' upper estimate of the topological entropy.

\begin{example}
Consider a graph $X$ which is not a tree. We can write $X=I\cup G$, where
$I$ is a free arc in $X$ identified with $[0,1]$ and $G$ is a subgraph of $X$
such that $I\cap G=\{0,1\}$. Let $n\ge 3$ and $g=g_n:X\to X$ be the map constructed
in the proof of \cite[Theorem~4.1]{ARR}. Recall that
if we put $X_i=\left[i/n, (i+1)/n \right]$ for $0\le i<n$, $i\ne 1$ and
$X_1^-=\left[ \frac{1}{n}, \frac{3}{2n} \right]$,
$X_1^+=\left[ \frac{3}{2n}, \frac{2}{n} \right]$,
then $g$ maps linearly $X_0$ onto $X_1^-\cup X_1^+$, $X_1^-$ onto $X_1^+$,
$X_1^+$ onto $X_1^+\cup X_2$ and $X_i$ onto $X_{i+1}$ for $i=2,\dots,n-2$.
Moreover, $g|_{X_{n-1}}:X_{n-1}\to G$ and $g|_G:G\to X_0$ are piecewise linear
maps with the number of pieces not depending on $n$.

Put $\lambda=\sqrt[n-2]{2}$ and
define the convex metric $d=d_n$ on $X$ in such a way that
$d(0,1)=d\left( 0, \frac{1}{n} \right)=1$,
$d\left( \frac{1}{n}, \frac{3}{2n} \right)=d\left( \frac{3}{2n}, \frac{2}{n} \right)
=\frac 12$,
$d\left( \frac{i}{n}, \frac{i+1}{n} \right)=\frac 12 \lambda^{i-1}$
for $i=2,\dots,n-1$, and
the Lipschitz constants of $g|_{X_{n-1}}$, $g|_G$ are bounded from above by a constant
$L$ not depending on $n$.
Then $g$ is $P$-Lipschitz with
$P=\left\{ \frac{i}{n},\ i=0,\dots,n\right\} \cup
\left\{ \frac{3}{2n}\right\}$,
$\AAa=\{X_0,X_1^-,X_1^+,X_2,\dots,X_{n-1},G\}$
and $L_{X_0}=L_{X_1^-}=1$, $L_{X_1^+}=L_{X_2}=\dots=L_{X_{n-2}}=\lambda$,
$L_{X_{n-1}}=L_G=L$.
Put $\BBb=\AAa\setminus \{G,X_{n-1}\}$.
Then
$L_\AAa=L$, $L_\BBb=\lambda$ and
an easy computation gives that $\theta_\BBb\le\frac{2}{n}$.
According to Proposition~\ref{P:entropyOfPLipschitz} we have
$$
 h(g_n)
 \le \log \lambda + \frac {2}{n} \log L
 \le \frac{\log(2L^2)}{n-2} \,,
$$
and thus $\limsup_n h(g_n)=0$.
\end{example}

We finish this section with a simple lemma giving a
sufficient condition for transitivity and exactness
of $P$-Lipschitz maps. Recall that a $01$-square matrix $M$
is called \emph{irreducible} if for every indices $i,j$ there is $n>0$
with $M^n_{ij}>0$; i.e.~the corresponding directed graph
$G_M$ is \emph{strongly connected} (there is a path from each vertex to every
other vertex).
The \emph{period $p_M$} of an irreducible matrix $M$
is the greatest common divisor of those $n>0$ for which $M^n_{ii}>0$ for some $i$.
Equivalently, $p_M$ is the greatest common divisor of the lengths of cycles
(closed paths) in $G_M$. An irreducible matrix $M$ and the corresponding graph $G_M$
are called \emph{primitive} provided $p_M=1$.

\begin{lemma}\label{L:exactnessOfPLipschitz}
 Let $X$ be a non-degenerate totally regular continuum endowed with a convex metric $d$
 such that $\lengthd{d}{X}<\infty$.
 Let a map $f:X\to X$ be $P$-Lipschitz w.r.t.~$\AAa$ such that
 $A\to B$ implies $f(A)\supseteq B$ for $A,B\in\AAa$.
 Assume that there is a dense system $\DDd$ of subcontinua of $X$ such that
 for every $D\in\DDd$ there
 are $n\in\NNN$, $A\in\AAa$ with $f^n(D)\supseteq A$.
 Then the following hold:
 \begin{enumerate}
    \item[(a)] $f$ is transitive but not totally transitive
      if and only if $M_f$ is irreducible but not
      primitive;
    \item[(b)] $f$ is exact if and only if $M_f$ is primitive.
 \end{enumerate}
\end{lemma}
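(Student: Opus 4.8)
The plan is to reduce both statements to a single dictionary between the topological dynamics of $f$ and the combinatorics of $G_f$. The central claim I would establish first is that, for all $A,B\in\AAa$ and all $n\ge 1$,
$$
 f^n(A)\supseteq B
 \quad\Longleftrightarrow\quad
 (M_f^n)_{AB}>0,
$$
i.e.~$f^n(A)$ covers $B$ exactly when $G_f$ contains a path of length $n$ from $A$ to $B$. The implication $\Leftarrow$ is immediate from the hypothesis $A\to B\Rightarrow f(A)\supseteq B$ together with induction: along a path $A=A_0\to A_1\to\dots\to A_n=B$ one has $f(A_k)\supseteq A_{k+1}$, whence $f^k(A)\supseteq A_k$ for all $k$. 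For $\Rightarrow$ I would use that $\interior{B}$ is open in $X$ (since $X\setminus\bigcup_{j\ne B}X_j=\interior{B}$ and the union is closed), so $A\cap f^{-n}(\interior{B})$ is a nonempty relatively open subset of $A$ and therefore meets the dense set $\interior{A}$; picking $a$ in this intersection and applying Lemma~\ref{L:MarkovMapTrajectories} to $a$ (with parameter $n+1$) yields a path $A_0\to\dots\to A_n$ with $f^i(a)\in A_i$, and the choices $a\in\interior{A}$, $f^n(a)\in\interior{B}$ force $A_0=A$ and $A_n=B$.

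With this dictionary I would prove the two equivalences: (I) $f$ is transitive iff $M_f$ is irreducible, and (II) $f$ is exact iff $M_f$ is primitive. For the ``if'' directions the density hypothesis is exactly what is needed: given nonempty open $U$, choose $D\in\DDd$ with $D\subseteq U$ and $f^m(D)\supseteq A$ for some $A,m$. If $M_f$ is irreducible, then for any target $B$ whose interior meets a prescribed open $V$ there is $k$ with $(M_f^k)_{AB}>0$, so $f^{m+k}(U)\supseteq f^k(A)\supseteq B$ meets $V$, giving transitivity. If $M_f$ is primitive there is $N$ with $(M_f^n)_{AB}>0$ for all $A,B$ and all $n\ge N$, whence $f^{m+n}(U)\supseteq\bigcup_B B=X$ for $n\ge N$, giving exactness. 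For the ``only if'' directions I would use the sets $\interior{A}$ as test open sets: transitivity applied to $\interior{A},\interior{B}$ produces, via the central claim, a path $A\to\dots\to B$, hence irreducibility; and for exactness, $f^n(\interior{A})=X$ forces $f^n(A)\supseteq B$ for every $B$, so (irreducibility being already noted) a period $p\ge 2$ would force $p\mid n$ from $f^n(A)\supseteq A$ while simultaneously $n\equiv 1\pmod p$ from $f^n(A)\supseteq B'$ with $B'$ in the next cyclic class, a contradiction; thus $p=1$.

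Statement (b) is exactly (II). For (a), the forward direction is formal: transitivity gives irreducibility by (I), and if $M_f$ were primitive then (II) would make $f$ exact and hence totally transitive, contrary to assumption, so $M_f$ is irreducible but not primitive. For the converse I would exploit that $f^p$ is again a map of the same kind: by the central claim its covering/transition relation is governed by $M_f^p$, it inherits the covering property, and $\DDd$ still witnesses the expansion hypothesis (any $D\in\DDd$ satisfies $(f^p)^k(D)\supseteq$ some element of $\AAa$ for large $k$, since every vertex of $G_f$ has positive out-degree). Hence (I) applies to $f^p$ and gives that $f^p$ is transitive iff $M_f^p$ is irreducible. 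When $p\ge 2$, the cyclic decomposition of $M_f$ into classes $\CCc_0,\dots,\CCc_{p-1}$ makes $M_f^p$ block-diagonal and hence reducible, so $f^p$ is not transitive and $f$ is not totally transitive; together with transitivity from irreducibility, this is exactly (a).

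The main obstacle is the forward direction of the central claim, namely converting a purely topological relation (such as $f^n(A)\cap\interior{B}\ne\emptyset$, as delivered by transitivity) into a genuine path in $G_f$ anchored at both endpoints; this hinges on locating a point of $\interior{A}$ whose $n$-th image lies in $\interior{B}$ and then reading off the combinatorial itinerary from Lemma~\ref{L:MarkovMapTrajectories}, using that interiors of splitting pieces are open in $X$ and dense in their pieces. A secondary point requiring care is verifying that $f^p$ genuinely inherits all hypotheses of the lemma, so that the transitivity--irreducibility equivalence may be reapplied to it; this is what upgrades ``period $\ge 2$'' to ``$f^p$ not transitive'' and thereby separates transitivity from total transitivity.
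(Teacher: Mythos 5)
Your proposal is correct and follows essentially the same route as the paper's proof: the itinerary lemma (Lemma~\ref{L:MarkovMapTrajectories}) converts orbits running between interiors of splitting pieces into paths in $G_f$, the covering hypothesis together with the dense system $\DDd$ yields transitivity/exactness from irreducibility/primitivity, and the cyclic decomposition settles the non-primitive case. The only differences are presentational: the paper leaves your ``central claim'' implicit (its one-line ``Hence $f^n(\interior{A})\cap\interior{B}=\emptyset$ for every $n>0$'') and deduces that $f^p$ is not transitive directly from the absence of paths of length $np$, rather than verifying that $f^p$ inherits the hypotheses and re-applying the transitivity criterion as you do; both versions are sound.
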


\begin{proof}
If $M_f$ is not irreducible there are $A,B\in\AAa$ such that there is no path from
$A$ to $B$. Hence $f^n(\interior{A})\cap \interior{B} = \emptyset$ for every $n>0$,
which contradicts transitivity of $f$. If $M_f$ is irreducible with the period $p>1$,
then there are $A,B\in\AAa$ such that there is no path from
$A$ to $B$ of length $n p$. I.e.~$f^p$ is not transitive and thus $f$ is not
totally transitive.

Assume now that $M_f$ is irreducible and take any nonempty open sets $U,V$.
Since $\bigcup_{A\in \AAa} \interior{A}\supseteq X\setminus P$
is dense in $X$, there is $B\in\AAa$ intersecting $V$.
By the assumption on $\DDd$ there are $D\in \DDd$, $n\in \NNN$ and $A\in\AAa$
such that $D\subseteq U$ and $f^{n}(D)\supseteq A$. Since $M_f$ is irreducible
there is $m$ with $f^{m}(A)\supseteq B$. Hence $f^{n+m}(D)\supseteq B$ and
so $f^{n+m}(U)$ intersects $V$. Thus $f$ is transitive.
Moreover, if $M_f$ is primitive there is $p$ such that, for every
$C\in\AAa$, there is a path
of length $p$ from $A$ to $C$. So $f^{n+p}(U)\supseteq
f^{n+p}(D)\supseteq f^{p}(A)=X$ and $f$ is exact.
\end{proof}

\section{Proof of \theoremMainRef{}}\label{S:applications}
In this section we prove \theoremMainRef{}. We use the notation from Section~\ref{SS:tentLike}.

\subsection{Lipschitz, exactly Devaney chaotic maps}

\begin{lemma}\label{P:exactLipschitz}
 There is $L>0$ such that
 any non-degenerate totally regular continuum  $X$ admits a
 convex metric $d$ with $\lengthd{d}{X}<\infty$
 and Lipschitz-$L$,
 Devaney chaotic maps $f,g:(X,d)\to (X,d)$ with positive entropies
 such that $f$ is not totally transitive
 and $g$ is exact.
\end{lemma}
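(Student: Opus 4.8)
The goal is to construct, on an arbitrary non-degenerate totally regular continuum $X$, a single convex metric $d$ of finite length together with two Lipschitz-$L$ maps $f,g$ that are Devaney chaotic (hence with positive entropy by exactness/transitivity arguments), with $f$ not totally transitive and $g$ exact, and with $L$ a universal constant independent of $X$. The natural strategy is to realize both $f$ and $g$ as $P$-Lipschitz maps with a cleverly chosen splitting so that Lemma~\ref{L:exactnessOfPLipschitz} decides their dynamical behaviour via the combinatorics of the transition matrix $M_f$, and so that Proposition~\ref{T:lipschitzXtoX} supplies LEL building blocks with Lipschitz constants controlled by a constant $L_\varrho$ depending only on $\varrho$ (fixed once and for all, say $\varrho=2$).

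\medskip

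\noindent\textbf{Plan.} First I would split into the two cases that drive the hypotheses of Proposition~\ref{T:MAINa}(d): the case where $\Cut_X(a,b)$ can be taken uncountable, and the generic case. By Lemma~\ref{L:totRegCircle}, if $X$ is \emph{not} a dendrite we can find $a,b$ and subcontinua $X_0,X_1$ with $X_0\cup X_1=X$, $X_0\cap X_1=\{a,b\}$, and $\Cut_{X_i}(a,b)$ uncountable; this gives us two ``arcs'' (in the sense of the $\coprod$-decomposition) joining $a$ to $b$ around a circle-like structure, which is exactly what lets us build a transitive-but-not-totally-transitive map by arranging the transition graph to have period $2$. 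If $X$ \emph{is} a dendrite, $\Cut_X(a,b)$ is automatically uncountable for suitable $a,b$ (any two points in a non-degenerate dendrite are separated by a whole arc of cut points), so the uncountability hypothesis of Proposition~\ref{T:MAINa}(d) is available throughout, and I can split $X$ as $\coprod_{i=1}^p(X_i,\dots)$ along a chain of cut points to manufacture as many ``pieces'' as I need.

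\medskip

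\noindent\textbf{Construction of $g$ (exact).} I would apply Proposition~\ref{T:lipschitzXtoX} (or directly Proposition~\ref{T:MAIN} with the composition $\varphi\circ\psi$) to obtain a $(\varrho,L_\varrho)$-LEL self-map of $(X,d_{X,a,b},\CCc_{X,a,b})$ that factors as $\varphi\circ\psi$ with $\psi:X\to I$ and $\varphi:I\to X$. By Proposition~\ref{P:tentLikeIsExact} such a map is exact and, because it is a composition through $I$, has the specification property and is therefore exactly Devaney chaotic with finite positive entropy. The Lipschitz constant is $\le p L_\varrho^2$, which for fixed $\varrho=2$ and bounded $p$ gives the universal $L$. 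Thus $g$ is essentially handed to us by the machinery of Section~\ref{SS:tentLike}; the only work is checking that the metric $d=d_{X,a,b}$ and the systems $\CCc$ are the same ones used for $f$, so that a single metric serves both maps.

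\medskip

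\noindent\textbf{Construction of $f$ (transitive, not totally transitive).} Here I would exploit the circle-like decomposition. Writing $X=X_0\cup X_1$ with $X_0\cap X_1=\{a,b\}$ as above, I view $X$ as built from two pieces and design $f$ to permute them with period $2$: informally, $f$ should map $X_0$ across $X$ in a length-expanding way and similarly for $X_1$, so that the $P$-transition matrix $M_f$ (with $P=\{a,b\}$ enlarged by the finitely many points coming from the piecewise-linear structure) is irreducible of period $2$. I would build $f|_{X_0}$ and $f|_{X_1}$ from LEL maps supplied by Proposition~\ref{T:lipschitzXtoX}, each factoring through $I$, so that the covering condition ``$A\to B$ implies $f(A)\supseteq B$'' of Lemma~\ref{L:exactnessOfPLipschitz} holds and the density hypothesis on $\DDd$ is met by taking $\DDd=\CCc$ together with the LEL expansion property (every $C\in\CCc$ eventually maps onto some $A\in\AAa$). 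Lemma~\ref{L:pwLip} then certifies that $f$ is globally Lipschitz with constant $\le L_\varrho$ (the pieces overlap only in $P$, and $d$ is convex). By Lemma~\ref{L:exactnessOfPLipschitz}(a), irreducibility-with-period-$2$ of $M_f$ yields transitivity without total transitivity; dense periodic points follow from the LEL/specification structure exactly as for $g$, so $f$ is Devaney chaotic but not totally Devaney chaotic, with positive entropy.

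\medskip

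\noindent\textbf{Main obstacle.} I expect the crux to be arranging a \emph{single} convex metric $d$ with $\lengthd{d}{X}<\infty$ and a single splitting $\AAa$ of $X$ relative to which \emph{both} $f$ and $g$ are $P$-Lipschitz with the required transition-matrix combinatorics ($M_g$ primitive, $M_f$ irreducible of period $2$), all while keeping $\Lip(f),\Lip(g)\le L$ for a constant $L$ independent of $X$. The metric is dictated by Proposition~\ref{T:MAINa}, so the tension is between the rigidity of that metric and the freedom needed to realize two different combinatorial types on the same decomposition; reconciling these, and verifying the covering condition $f(A)\supseteq B$ whenever $A\to B$ (which is stronger than mere $P$-Lipschitzness and is what makes Lemma~\ref{L:exactnessOfPLipschitz} applicable), is where the real care is required. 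The density hypothesis on $\DDd$ should follow routinely from the LEL length-expansion property forcing every member of $\CCc$ to eventually cover some piece, but checking it cleanly for both maps simultaneously is the remaining bookkeeping.
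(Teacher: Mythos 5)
Your plan reproduces the paper's proof in all structural respects: the same dichotomy (non-dendrite case via Lemma~\ref{L:totRegCircle}; dendrite case via a cut point $a$, where the auxiliary points must be swapped, $f_0(b_0)=b_1$, $f_1(b_1)=b_0$, rather than fixed), the same glued convex metric from Lemma~\ref{L:convexMetricOnUnion}, the same ``swap'' map $f$ built from two maps of Proposition~\ref{T:lipschitzXtoX} (with $p=1$) exchanging $X_0$ and $X_1$, and the same $g$ (the paper disposes of $g$ with a one-line citation of \cite{SpA}, i.e.\ of Propositions~\ref{T:MAIN} and~\ref{P:tentLikeIsExact}, and spends the whole proof on $f$). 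Your detour through Lemma~\ref{L:exactnessOfPLipschitz}(a) with the period-two transition matrix is a workable alternative for proving that $f$ is transitive but not totally transitive --- the covering condition and the density hypothesis for $\DDd=\CCc_0\cup\CCc_1$ can indeed be checked by the finite-length blow-up argument --- but that lemma says nothing about periodic points or entropy, and this is where your proposal has a genuine gap.

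You claim that dense periodic points of $f$ ``follow from the LEL/specification structure exactly as for $g$.'' They cannot: Proposition~\ref{P:tentLikeIsExact} applies only to LEL \emph{self}-maps, and every LEL self-map is exact, whereas $f$ is manifestly not exact --- $f^n(\interior{X_0})\subseteq X_{n \operatorname{mod} 2}\ne X$ for all $n$ --- so $f$ is not LEL with respect to any dense system and Proposition~\ref{P:tentLikeIsExact} can never be applied to $f$ itself. The missing observation, which is the heart of the paper's proof, is to pass to the second iterate restricted to the pieces: $f^2|_{X_i}=f_{1-i}\circ f_i:X_i\to X_i$ is a LEL self-map of $(X_i,d_i,\CCc_i)$ (the dichotomy of Proposition~\ref{T:lipschitzXtoX}(a2) composes, giving expansivity constant $\varrho^2$), and it factors through $I$ because each $f_i$ does by Proposition~\ref{T:lipschitzXtoX}(b); hence $f^2|_{X_i}$ is exactly Devaney chaotic. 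This single observation yields at once everything you still owe: dense periodic points of $f$ (periodic points of $f^2$ are periodic points of $f$), positive entropy $h(f)\ge \tfrac12\, h(f^2|_{X_0})>0$ (which you also assert without derivation), transitivity of $f$ (exactness of $f^2$ on each piece plus the swap), and failure of total transitivity ($X_0,X_1$ are $f^2$-invariant with disjoint nonempty interiors), making your Lemma~\ref{L:exactnessOfPLipschitz} detour superfluous. Finally, your ``main obstacle'' is misplaced: $f$ and $g$ never need a common splitting, nor need $g$ be $P$-Lipschitz with prescribed combinatorics --- the only shared object is the metric, and the real crux is the $f^2|_{X_i}$ observation above.
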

\begin{proof} The existence of a map $g:(X,d)\to (X,d)$ was shown in \cite{SpA},
 so we only need to construct $f$. If $X$ contains a circle, there are points $a\ne b$
 such that $X$ is the union of two subcontinua $X_0,X_1$
 with $X_0\cap X_1=\{a,b\}$ and $\Cut_{X_i}(a,b)$ is uncountable for $i=0,1$.
 (Such points $a,b$ can be chosen as follows.
 By \cite{BNT} we can write
 $X=\invlim (X_n,f_n)$, where $X_n$ are graphs and $f_n:X_{n+1}\to X_n$ are monotone;
 let $\pi_n:X\to X_n$ ($n\in\NNN$) be the natural projections.
 Since $X$ is not a dendrite,
 by \cite[Theorem~10.36]{Nad}
 there is $m$ such that $X_m$ contains a circle $S$. Let $C$ be the set of all points
 $x_m\in S$ which are not vertices of $X_m$ and are such that $\pi_m^{-1}(x_m)$
 are singletons. Then $S\setminus C$ is countable
 since every system of disjoint non-degenerate subcontinua of $X$ is countable.
 Take $a_m\ne b_m$ from $C$ and put $a=\pi_m^{-1}(a_m)$, $b=\pi_m^{-1}(b_m)$.
 Write $X_m$ as the union $X_{m,0}\cup X_{m,1}$
 of two non-degenerate subgraphs with $X_{m,0}\cap X_{m,1}=\{a_m,b_m\}$.
 Then $X_i=\pi_m^{-1}(X_{m,i})$ ($i=0,1$), being inverse limits of continua
 \cite[Proposition~2.1.17]{Macias},
  are non-degenerate subcontinua of $X$
 covering $X$ and $X_0\cap X_1=\pi_m^{-1}(X_{m,0}\cap X_{m,1}) = \{a,b\}$.)

 Without loss of generality we may assume that
 $d_{X_0,a,b}(a,b)=d_{X_1,a,b}(a,b)$ (otherwise we replace one of the metrics
 by a constant multiple of it).
 Let $d$ be the convex metric on $X$ defined by (\ref{EQ:convexMetricOnUnion})
 for the splitting $\{X_0,X_1\}$ and metrics $d_0=d_{X_0,a,b}$, $d_1=d_{X_1,a,b}$;
 by Lemma~\ref{L:convexMetricOnUnion}, $d$
 coincides with $d_{X_i,a,b}$ on $X_i$
 for $i=0,1$.
 Let $f_0:X_0\to X_1$, $f_1:X_1\to X_0$ be maps
 from Proposition~\ref{T:lipschitzXtoX} (with $\varrho>1$, $p=1$) fixing points $a$ and $b$.
 Define $f:X\to X$ by $f(x)=f_i(x)$ if $x\in X_i$.
 Then, for $i=0,1$, $f^2|_{X_i}:X_i\to X_i$ is LEL and so, by
 Proposition~\ref{P:tentLikeIsExact},
 $f^2|_{X_i}$ is
 exactly Devaney chaotic.
 Thus $f$ is Devaney chaotic and has positive entropy.
 Since $f^2$ is not transitive, $f$ is not totally transitive.

 If $X$ does not contain a circle, then $X$ is a dendrite. Take any non-end point
 $a\in X$ and write $X=X_0\cup X_1$, where $X_i$'s are non-degenerate
 subdendrites with $X_0\cap X_1=\{a\}$. Fix $b_i\in X_i\setminus\{a\}$ and notice that,
 since $X_i$'s are dendrites, the sets
 $\Cut_{X_i}(a,b_i)$ are uncountable.
 Now we can proceed analogously as in the non-dendrite case; the only change is that
 instead of $f_i(b)=b$ we require $f_0(b_0)=b_1$ and $f_1(b_1)=b_0$.
\end{proof}

\begin{corollary}\label{C:IT(X)_is_finite}
 Every non-degenerate totally regular continuum admits a (not totally) Devaney
 chaotic, as well as an exactly Devaney chaotic map with finite positive entropy.
 Every compact metric space which is the disjoint union of finitely many non-degenerate
 totally regular continua admits a Devaney chaotic map with finite positive entropy.
\end{corollary}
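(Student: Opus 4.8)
The first two assertions are immediate restatements of Lemma~\ref{P:exactLipschitz}: for a non-degenerate totally regular continuum $X$ that lemma already produces a convex metric $d$ with $\lengthd{d}{X}<\infty$ together with Lipschitz-$L$ maps $f,g:(X,d)\to(X,d)$, where $f$ is Devaney chaotic but not totally transitive and $g$ is exactly Devaney chaotic, both with positive entropy. By the remark following Definition~\ref{D:generalizedMarkovMap}, a Lipschitz-$L$ map on a continuum of finite length with $\dimUpper(X)=1$ satisfies $h\le\log^+ L<\infty$ (Lemma~\ref{L:separatedSetsInContinuaOfFiniteLength} gives $\dimUpper(X)=1$), so both maps have \emph{finite} positive entropy. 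Thus the first two sentences require no further work beyond invoking the lemma and this entropy bound.

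The only genuinely new content is the last sentence, concerning a finite disjoint union $X=X_1\sqcup\dots\sqcup X_m$ of non-degenerate totally regular continua. The plan is to build a single Devaney chaotic map on $X$ by cyclically permuting the pieces so that the return map to each piece is one of the exactly Devaney chaotic maps already guaranteed. Concretely, I would first use Lemma~\ref{P:exactLipschitz} to equip each $X_j$ with its convex metric $d_j$ of finite length; since the $X_j$ are disjoint and compact they lie at positive mutual distance, so these combine into a single metric on $X$ (rescaling so that inter-component distances dominate) under which each $X_j$ retains its finite length and the total length stays finite. The key step is to choose homeomorphisms and an exact building block appropriately: pick for each $j$ an exactly Devaney chaotic map $g_j$ on $X_j$ (from Lemma~\ref{P:exactLipschitz}), and define $F:X\to X$ sending $X_j$ homeomorphically onto $X_{j+1}$ for $1\le j<m$ and mapping $X_m$ onto $X_1$ by a map whose composition around the cycle realizes the exact map $g_1$ on $X_1$. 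Then $F^m$ restricts on each $X_j$ to a conjugate of an exactly Devaney chaotic map, hence $F$ itself is transitive with dense periodic points.

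To verify the desired conclusions, I would argue as follows. Transitivity of $F$ follows because any nonempty open $U$ meets some $X_j$; iterating $F$ cycles through all components, and exactness of the return map $F^m|_{X_j}$ eventually spreads $F^{km}(U)$ over all of $X_j$, after which one more cycle hits every other component's open sets. Dense periodic points for $F$ follow from dense periodic points of each $F^m|_{X_j}$, since a point periodic under the return map is periodic under $F$ (with period multiplied by $m$). Finally the entropy bound: each piece being a totally regular continuum of finite length has $\dimUpper(X_j)=1$, the homeomorphisms between pieces can be taken Lipschitz, so $F$ is Lipschitz-$L'$ for some finite $L'$ on the whole finite-length space $X$, whence $h(F)\le\log^+ L'<\infty$ by the same argument as above; positivity follows since $h(F)\ge(1/m)\,h(F^m|_{X_1})>0$.

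The main obstacle I anticipate is the bookkeeping around the metric on the disjoint union and ensuring the connecting homeomorphisms are genuinely Lipschitz with a uniform constant, so that the finite-entropy estimate via Lemma~\ref{L:separatedSetsInContinuaOfFiniteLength} applies to the glued map $F$; the dynamical verification (transitivity, dense periodicity) is routine once the exact return-map structure is in place. I would also note that $X$ here is \emph{not} connected, so one cannot appeal to continuum-specific results directly — the entropy bound must be applied componentwise and the separated-set count summed over the (finitely many) pieces, which only multiplies the estimate by a constant and does not affect finiteness.
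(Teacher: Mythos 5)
Your reduction of the first two sentences to Lemma~\ref{P:exactLipschitz} is correct and is exactly how the paper reads them (the paper gives no separate proof of the corollary): the maps $f,g$ there are Lipschitz-$L$ on a convex metric of finite length, so $h\le\dimUpper(X)\cdot\log^+L=\log^+L<\infty$ by Lemma~\ref{L:separatedSetsInContinuaOfFiniteLength}, while positivity, non-total transitivity of $f$ and exactness of $g$ come straight from the lemma.

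The disjoint-union part, however, contains a genuine gap. You build $F$ by ``sending $X_j$ homeomorphically onto $X_{j+1}$'', but the components $X_1,\dots,X_m$ are arbitrary non-degenerate totally regular continua and need not be homeomorphic to one another --- take, say, an arc and a circle, or an arc and a triod. The connecting homeomorphisms you require simply do not exist in general, so the cyclic construction fails at its first step (and your later concern about making these homeomorphisms uniformly Lipschitz is moot). The repair stays entirely inside the paper's toolkit: replace the homeomorphisms by the surjective LEL maps of Proposition~\ref{T:MAIN} (equivalently Proposition~\ref{T:lipschitzXtoX} with $p=1$), which exist between \emph{any} two non-degenerate totally regular continua. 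Taking $F|_{X_j}:X_j\to X_{(j+1)\bmod m}$ to be such a map, each return map $F^m|_{X_j}$ is a composition of LEL maps, hence again LEL (surjectivity and the Lipschitz property compose, and length-expansion composes because if the final image is not the whole space then, by surjectivity, no intermediate image was either), and it factors through $\III$ because each building block does by Proposition~\ref{T:MAIN}. Proposition~\ref{P:tentLikeIsExact} then makes every $F^m|_{X_j}$ exactly Devaney chaotic with finite positive entropy. After this substitution your dynamical bookkeeping goes through essentially verbatim: transitivity of $F$ follows from exactness of the return maps together with surjectivity of the connecting maps, dense periodic points descend from the return maps, and since the $X_j$ are clopen $F^m$-invariant sets, $h(F)=\tfrac1m\,h(F^m)=\tfrac1m\max_j h\bigl(F^m|_{X_j}\bigr)\in(0,\infty)$, which also eliminates the need for any global metric-gluing or uniform Lipschitz estimate on $X$.
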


\subsection{Exactly Devaney chaotic maps with arbitrarily small entropy}
\label{SS:exactSmallEntropy}
In what follows we show that under some conditions a totally regular continuum $X$
admits (exactly) Devaney chaotic maps with arbitrarily small entropy.
We consider two subclasses of totally regular continua:
those containing arbitrarily large generalized stars and
those containing a non-disconnecting
free arc. The constructions are modifications
of those for stars \cite[Theorem~1.2]{AKLS} and for graphs which are not trees
\cite[Theorems~3.7 and 4.1]{ARR}. The key difference is that instead of constructing
piecewise linear Markov maps we construct $P$-Lipschitz maps using
Proposition~\ref{T:lipschitzXtoX}.

Let $X$ be a continuum and $k\ge 2$ be an integer.
We say that $X$ \emph{contains a generalized $k$-star}
if there are a point $a\in X$ and $k$ components $C_1,\dots,C_k$
of $X\setminus\{a\}$ such that
the closures $X_i=\closure{C_i}=C_i\cup\{a\}$ ($i=1,\dots,k$) are homeomorphic relative to $a$;
i.e.~for every $i,j$ there is a homeomorphism $h_{ij}:X_i\to X_j$ fixing the point $a$
(see \FigureKStar{}).
Recall that $X$ \emph{contains arbitrarily large generalized stars}
if it contains a generalized $k$-star for every $k\ge 2$.

\begin{lemma}\label{T:I(omega-star)}
There is a constant $c>0$ such that
any totally regular continuum $X$ containing a generalized $k$-star
($k\ge 2$) admits a (not totally) Devaney chaotic map $f$
as well as an exactly Devaney chaotic map $g$ with positive topological entropies
bounded from above by $c/k$.
Consequently, if $X$ contains arbitrarily large generalized stars then $\IED(X)=0$.
\end{lemma}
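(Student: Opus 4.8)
The plan is to build the desired maps directly on the generalized $k$-star and then extend them to the remaining part of $X$ in a way that does not disturb the entropy estimate. Let $a$ be the branch point and $X_1,\dots,X_k$ the closures of the $k$ homeomorphic components, so that $X_i\cap X_j=\{a\}$ for $i\neq j$ and $S=X_1\cup\dots\cup X_k$ is the generalized star. First I would set $Y=\closure{X\setminus S}$, so that $X=S\cup Y$ with $S\cap Y$ finite (this uses that a generalized star meets the rest of the continuum in finitely many points; in the worst case one absorbs extra pieces into one of the arms). The splitting $\AAa$ will consist of the $k$ arms $X_1,\dots,X_k$ (each possibly further subdivided by a fixed initial piece, exactly as in the graph construction preceding this lemma) together with finitely many pieces covering $Y$, with $P_\AAa$ finite and $a\in P$.

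\textbf{Constructing the map via \mbox{Proposition~\ref{T:lipschitzXtoX}}.}
I would make $f$ cyclically permute the arms in the style of the map $g_n$ from the Example: fix a convex metric $d$ on each arm using \mbox{Proposition~\ref{T:MAINa}}, scaled so that $\lengthd{d}{X}<\infty$ and so that the Lipschitz constants are controlled. Using \mbox{Proposition~\ref{T:lipschitzXtoX}} I obtain, for each arm, a LEL map onto the next arm (factoring through $I$ via $\varphi\circ\psi$), arranged so that $f(X_i)=X_{i+1}$ for $i<k$, with the ``return'' arm $X_k$ mapped across the whole star so as to make the $P$-transition matrix irreducible. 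For the \emph{exact} map $g$ one chooses the transitions so that $M_g$ is primitive; for the merely transitive (not totally transitive) map $f$ one keeps a nontrivial period, as permitted by \mbox{Lemma~\ref{L:exactnessOfPLipschitz}}. Both maps are $P$-Lipschitz by \mbox{Lemma~\ref{L:pwLip}}, are length-expanding on the dense system $\CCc$, and have dense periodic points and positive entropy by \mbox{Proposition~\ref{P:tentLikeIsExact}}, since each is locally a composition $\varphi\circ\psi$.

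\textbf{The entropy estimate.}
The heart of the argument is the bound $c/k$, and this is where \mbox{Proposition~\ref{P:entropyOfPLipschitz}} does the work. I would arrange the metric so that, along most of each arm, the local Lipschitz constant is a factor $\lambda=\sqrt[m]{2}$ just barely exceeding $1$ (with $m$ comparable to $k$), while only a bounded number of pieces near $a$ and inside $Y$ carry a large constant $L$ independent of $k$. Setting $\BBb=\AAa\setminus\{\text{the finitely many large pieces}\}$, so that $L_\BBb=\lambda$ and $L_\AAa=L$, the key combinatorial fact is that any $f$-orbit passes through the large pieces with asymptotic frequency $\theta_\BBb=O(1/k)$, because traversing the cycle of $k$ arms forces only a bounded number of visits to $\BBb^c$ per period. \mbox{Proposition~\ref{P:entropyOfPLipschitz}} then yields $h(f)\le\log\lambda+2\theta_\BBb\log^+L\le c/k$ for a constant $c$ independent of $X$ and $k$. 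The main obstacle I anticipate is not the dynamics but the bookkeeping of the metric: one must simultaneously guarantee $\lengthd{d}{X}<\infty$, keep the arm metrics genuinely convex and mutually compatible across the finite set $P_\AAa$ (so that \mbox{Lemma~\ref{L:convexMetricOnUnion}} applies), and verify that the frequency $\theta_\BBb$ really is $O(1/k)$ uniformly, rather than merely bounded. The final sentence, $\IED(X)=0$ when $X$ contains arbitrarily large generalized stars, follows at once by letting $k\to\infty$.
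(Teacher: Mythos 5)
Your overall skeleton---split $X$ into the $k$ arms plus the remainder, build a $P$-Lipschitz map whose transition graph is essentially a cycle (primitive for $g$, with a nontrivial period for $f$), and bound the entropy via Proposition~\ref{P:entropyOfPLipschitz} with $\theta_\BBb=O(1/k)$---is the paper's strategy. But your treatment of the arm-to-arm maps breaks the entropy bound, which is the heart of the lemma. You take, for every $i<k$, a LEL map $X_i\to X_{i+1}$ from Proposition~\ref{T:lipschitzXtoX}. Any LEL map with expansivity constant $\varrho>1$ has Lipschitz constant at least $\varrho$: for a small $C\in\CCc$ with $f(C)\ne X'$ one has $\varrho\cdot\length{C}\le\length{f(C)}\le\Lip(f)\cdot\length{C}$. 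Moreover the maps furnished by Proposition~\ref{T:lipschitzXtoX} fold (they factor through the tent-like maps $f_{k_i}$ with $k_i\ge 3$), so their Lipschitz constant $\tilde L_{1,\varrho}$ is a constant bounded away from $1$ and independent of $k$. Consequently $\log^+ L_\BBb$ is bounded below by a positive constant, and Proposition~\ref{P:entropyOfPLipschitz} yields only a constant upper bound, not $c/k$; indeed the entropy of your map genuinely is of constant order, since the composition around the cycle restricted to an arm is length-expanding with expansivity roughly $\varrho^{k+1}$---expansion happens on every arm instead of once per period. Your proposed fix (rescale the metric so local Lipschitz constants are $\lambda=\sqrt[m]{2}$) is incompatible with the arm maps being length-expanding surjections: forcing $\Lip\le\lambda$ on a folding map forces the arm lengths to shrink geometrically, which blows up the Lipschitz constant of the return map exponentially in $k$, so $2\theta_\BBb\log^+ L_\AAa$ is again of constant order. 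The idea you are missing is to exploit the homogeneity in the definition of a generalized $k$-star: since the arms are homeomorphic relative to $a$, one can choose the metrics $d_i=d_{X_i,a,a}$ so that the connecting homeomorphisms $h_i:X_i\to X_{i+1}$ become \emph{isometries}, and use Proposition~\ref{T:lipschitzXtoX} only for the two returns $X_k\to X_0$ and $X_0\to X_1$ (for $g$: $X_0\to X_1\cup X_2$ with $p=2$, which is also what makes $M_g$ primitive). Then $L_\BBb=1$, so $\log^+L_\BBb=0$ and the bound becomes $2\theta_\BBb\log^+ L_\AAa\le c/k$; exactness still holds because the dense-system hypothesis of Lemma~\ref{L:exactnessOfPLipschitz} only requires lengths to grow each time the orbit of a continuum passes through $X_k$ and $X_0$.

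There is a second gap: dense periodic points. You assert that both maps have dense periodic points ``by Proposition~\ref{P:tentLikeIsExact}, since each is locally a composition $\varphi\circ\psi$,'' but that proposition applies to LEL \emph{self-maps} of a single continuum. For the cyclic map $f$ this is repairable: $f^{k+1}|_{X_0}$ is an honest LEL self-map of $X_0$ factoring through $I$, hence exactly Devaney chaotic, and its periodic points are carried around the cycle by $f$. For the exact map $g$ it is not: since $g(X_0)=X_1\cup X_2$, no iterate of $g$ restricted to a piece of the splitting is a self-map of that piece, and exactness alone does not give dense periodic points. The paper needs a genuinely separate argument here: attach an arc $A$ to $X$ at $a$, use the factorization $g_0=\varphi\circ\psi$ through $I\cong A$ to define an exact map $\tilde g$ on $\tilde X=X\cup A$, invoke \cite[Theorem~1.1]{AKLS} for the continuum $\tilde X$, which has a disconnecting interval, and then pull the periodic points of $\tilde g$ back to $g$. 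Your proposal contains no substitute for this step.
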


\begin{figure}[ht!]
  \includegraphics{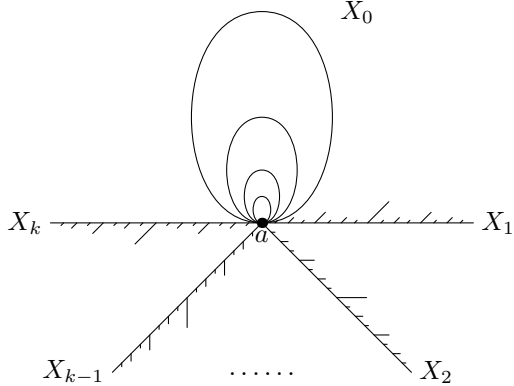}
  \caption{A continuum containing a generalized $k$-star}
  \label{Fig:kstar}
\end{figure}

\begin{proof}
Since $X$ contains a generalized $k$-star, there is $a\in X$ such that we can write
$
 X = X_0\cup X_1\cup\dots \cup X_{k},
$
where $X_i$'s are subcontinua of $X$,
$X_i\cap X_j=\{a\}$ for every $i\ne j$  and
$X_i,X_j$ are homeomorphic relative to $a$ for $i,j\ge 1$.
Further, $X_1,\dots,X_k$ are non-degenerate and, by replacing
$k$ with $k-1$ if necessary, we may assume that also $X_0$ is such.
For $i=0,\dots,k$ put $d_i=d_{X_i,a,a}$ and $\CCc_i=\CCc_{X_i,a,a}$.

We may assume that the metrics $d_i$ are such that
for every $1\le i<k$ there is an isometry $f_i:(X_i,d_i)\to (X_{i+1},d_{i+1})$
fixing $a$. Since $\bigcup_{i\ne j} X_i\cap X_j=\{a\}$ is a singleton,
by Lemma~\ref{L:convexMetricOnUnion} there is a convex metric $d$ on $X$ such that $d|_{X_i\times X_i}=d_i$
for every $i$. Let $f_{k}:X_{k}\to X_0$, $f_0:X_0\to X_1$ be maps
from Proposition~\ref{T:lipschitzXtoX} (with $p=1$, $\varrho>1$)
such that $f_{k}(a)=f_0(a)=a$. Without loss of generality we may assume
that if $C\in \CCc_{i}$ ($i=1,\dots,k-1$) then $f_i(C)\in \CCc_{i+1}$.
We define the map $f:X\to X$ by
 $f(x)=f_i(x)$ for $x\in X_i$, $i=0,\dots,k$.
 Proposition~\ref{P:tentLikeIsExact} implies that
 $f^{k+1}|_{X_0}: X_0\to X_0$, being LEL, is exactly Devaney chaotic.
 Hence $h(f)>0$, $f$ is a (not totally) Devaney chaotic map,
 and, since $d$ is convex, $f$ is Lipschitz with $\Lip(f)\le\tilde{L}$
 by Lemma~\ref{L:pwLip}.
 Moreover, since
 $\Lip(f^{k+1}) \le \tilde{L}^2$ we have $h(f)\le \tilde{L}^2/(k+1)\le c/k$
 for some constant $c$ depending only on $\tilde{L}$.

 To construct an exactly Devaney chaotic map $g:X\to X$
 we take the metric $d$ on $X$ as in the previous case
 and we define $g|_{X_i}=f_i$ for $i=1,\dots,k$. The only difference
 is the definition of $g$ on $X_0$: we put $g|_{X_0}=g_0$,
 where
 $g_0:X_0\to (X_1\cup X_2)$ is a map obtained
 from Proposition~\ref{T:lipschitzXtoX}
 with $p=2$, $\varrho'>2$, $X'=X_1\cup X_2$ and $a'_i=a$ for $i=0,1,2$.
 Then $g_0(a)=a$.
 By Proposition~\ref{T:lipschitzXtoX} we have that,
 for $C\in\CCc_{0}$,
 if $g_0(C)$ contains
 neither $X_1$ nor $X_2$, then
 ${g_0(C)}=C_1\cup C_2$, where $C_1\in\CCc_{1}$ and $C_2\in\CCc_{2}$
  are continua and
  $\length{g_0(C)}=\length{C_1}+\length{C_2}\ge \varrho' \cdot\length{C}$.
  Hence
\begin{equation}\label{EQ:kstar1}
 \length{C'}
     \ge
 \frac{\varrho'}{2}\cdot \length{C}
 \qquad
 \text{for some}\quad
 C'\in \CCc_{1}\cup \CCc_{2},\
 C'\subseteq g(C).
\end{equation}
 Notice that $\Lip(g)$
 is bounded from above by an absolute constant $L$,
 see Lemma~\ref{L:pwLip}.

 We are going to show the entropy bound for $g$ using
 Proposition~\ref{P:entropyOfPLipschitz}.
 Put $P=\{a\}$,
 $\AAa=\{X_0,\dots,X_k\}$, $\BBb=\{X_1,\dots,X_{k-1}\}$,
 $L_{X_i}=1$ for $1\le i<k$ and
 $L_{X_0}=L_{X_k}=L$. Then $g$ is $P$-Lipschitz w.r.t.~$\AAa$ and $(L_{X_i})_i$, $L_\AAa=L$ and $L_\BBb=1$.
 Moreover, $X_i\to X_j$ if and only if $j=(i+1) \operatorname{mod} (k+1)$
 or $i=0$, $j=2$.
 Fix any $n\in\NNN$ and $\AAA=(A_0,\dots,A_{n-1})\in\AAa^n$
 with exactly $k_n=k_n^\BBb$ members from $\AAa\setminus\BBb$, see (\ref{EQ:thetaB}).
 Let $l_1<l_2<\dots<l_{k_n}$ be the indices $l$ with $A_l\not\in\BBb$,
 i.e.~with $A_l\in\{X_0,X_k\}$.
 Then, for any $l_j$, $1\le i\le k-1$ and $l_j+i<n$, it holds that
 $A_{l_j+i}\in\{X_{i-1},X_i,X_{i+1}\}$
 and thus $A_{l_j+2},\dots,A_{l_j+k-2}\in \BBb$.
 So $l_{j+2}-l_j\ge k-1$ for every $j\le k_n-2$.
 We see at once that
 $n-1\ge l_{k_n} \ge (k_n-1)(k-1)/2$,
 i.e.~$\theta_\BBb
  = \limsup_{n}    k_n/n
  \le
  2/(k-1)$.
 By Proposition~\ref{P:entropyOfPLipschitz},
 $h(g)\le \log^+ L_\BBb + 2\theta_\BBb \log^+ L_\AAa \le (4/(k-1)) \log L
 \le c/k$ for some absolute constant $c$.

 To prove that $g$ is exact we use Lemma~\ref{L:exactnessOfPLipschitz}
 with $\DDd = \cup_i \CCc_{i}$.
 To this end take any $i$ and $C\in \CCc_{i}$ and
 suppose that for every $n$ the set
 $g^n(C)$ does not contain any $X_i$.
 Then Proposition~\ref{T:lipschitzXtoX} and (\ref{EQ:kstar1})
 give that, for every $n\ge 1$,
 $$
  \length{g^n(C)} \ge q^n \cdot \length{C}
  \qquad\text{with}\quad
  q=\min\{\varrho,\varrho'/2\}>1.
 $$
 (The inequality can be shown by induction as follows.
 Trivially it holds for $n=0$.
 Put $C_0=C$ and assume that, for some $n\ge 0$, there is $i$ such that the set
 $g^n(C)$ contains some $C_n\in \CCc_{i}$ with
 $\length{C_n} \ge q^n \cdot\length{C}$.
 If $i\ne 0$ then put $C_{n+1}=g(C_n)\in \CCc_{(i+1) \operatorname{mod} \, (k+1)}$
 and use Proposition~\ref{T:lipschitzXtoX}.
 If $i=0$, put $C_{n+1}=C'$, where $C'$ is the continuum from (\ref{EQ:kstar1}).)
 But this contradicts the fact that $\length{X}<\infty$.
 Hence $\DDd$ satisfies the assumption from Lemma~\ref{L:exactnessOfPLipschitz}.
 Notice also that $A\to B$ implies $g(A)\supseteq B$.
 And since the $P$-transition graph of $g$
 contains cycles $X_0\to X_1\to\dots\to X_k\to X_0$ and
 $X_0\to X_2\to X_3\to\dots\to X_k\to X_0$ of lengths $k+1$ and $k$,
 the $P$-transition matrix of $g$ is primitive. Hence,
 by Lemma~\ref{L:exactnessOfPLipschitz}, $g$ is exact.

 To finish it suffices to show that $g$ has dense periodic points.
 Let $\tilde{X}=X\cup A$ be a totally regular continuum obtained
 from $X$ by adding to it an arc $A$ such that $A\cap X=\{a\}$. Take a convex metric
 $\tilde{d}$ on $\tilde{X}$ such that it coincides with $d$ on $X$ and
 the length of $A$ is $1$. Then there is an isometry from $\III$ onto $A$
 mapping $0$ to $a$. So, by the choice of $g_0$, there
 are Lipschitz maps $\psi:X_0\to A$ and $\varphi:A\to X_1\cup X_2$
 with $\varphi\circ\psi=g_0$ and $\psi(a)=\varphi(a)=a$.
 Define the map $\tilde{g}:\tilde{X}\to \tilde{X}$ by
 $$
  \tilde{g}({x}) = \begin{cases}
    g({x}) & \text{if } x\in X_1\cup \dots\cup X_k;
   \\
    \psi(x) & \text{if } x\in X_0;
   \\
    \varphi(x) & \text{if } x\in A.
  \end{cases}
 $$
 Analogously as before, $\tilde{g}$ is exact. Since $\tilde{X}$ contains
 a disconnecting interval (an open set homeomorphic to $(0,1)$ such that
 any point of it disconnects $X$ into exactly two components), $\tilde{g}$
 has dense periodic points by \cite[Theorem~1.1]{AKLS}.
 Trivially, any periodic point of $\tilde{g}$ in $X$ is also a periodic point of $g$.
 So also the periodic points of $g$ are dense.
\end{proof}

\begin{lemma}\label{T:I(free-arc)}
 There is $c>0$ with the following property.
 Let $X$ be a totally regular continuum
 which can be written as the union of non-degenerate subcontinua
 $$
  X = X_0\cup X_1\cup \dots\cup X_k,
 $$
 where $k\ge 2$ and the following hold:
\begin{enumerate}
    \item[(a)] there are distinct points $a_0,\dots,a_k$ with
     $X_i\cap X_{(i+1) \operatorname{mod} (k+1)} = \{a_i\}$ for
     $i=0,\dots,k$;
    \item[(b)] $X_i\cap X_j=\emptyset$ whenever $2\le \abs{i-j}<k$;
    \item[(c)] for $i=1,\dots,k-1$ there is a homeomorphism $h_i:X_i\to X_{i+1}$
     such that $h_i(a_{i-1})=a_{i}$ and $h_i(a_i)=a_{i+1}$;
    \item[(d)] $\Cut_{X_1}(a_0,a_1)$ is uncountable.
\end{enumerate}
Then $X$ admits a (not totally) Devaney chaotic map $f$
and an exactly Devaney chaotic map $g$ with positive topological entropies
bounded from above by $c/k$.
\end{lemma}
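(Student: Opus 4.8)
The plan is to imitate the construction in Lemma~\ref{T:I(omega-star)} and in \cite[Theorems~3.7 and~4.1]{ARR}, regarding the chain $X_1\cup\dots\cup X_k$ as a ``subdivided arc'' (by (c) each $X_i$ is an isometric copy of $X_1$) and $X_0$ as the remaining ``graph part''. First I would fix the data: by Proposition~\ref{T:MAINa} take $d_i=d_{X_i,a_{i-1},a_i}$ and $\CCc_i=\CCc_{X_i,a_{i-1},a_i}$, rescaling so that, using (c), every $h_i\colon(X_i,d_i)\to(X_{i+1},d_{i+1})$ is an isometry fixing the marked points; since $\bigcup_{i\ne j}X_i\cap X_j=\{a_0,\dots,a_k\}$ is finite, Lemma~\ref{L:convexMetricOnUnion} glues the $d_i$ into a convex metric $d$ on $X$ with $d|_{X_i\times X_i}=d_i$ and $\length{X}<\infty$. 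Note that (c) and (d) give that $\Cut_{X_i}(a_{i-1},a_i)$ is uncountable for \emph{every} $1\le i\le k$, so on each of $X_1,\dots,X_k$ I can control the images of \emph{both} marked points when applying Proposition~\ref{T:lipschitzXtoX}; on $X_0$ I can directly control only one of them.

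For the (not totally) Devaney chaotic map I would take the cyclic rotation $f$ defined by $f|_{X_i}=h_i$ for $1\le i\le k-1$, by a $(\varrho,\tilde L)$-LEL surjection $u\colon X_k\to X_0$ from Proposition~\ref{T:lipschitzXtoX} with $u(a_{k-1})=a_k,\ u(a_k)=a_0$ (both values attainable since $\Cut_{X_k}(a_{k-1},a_k)$ is uncountable), and by a LEL surjection $w\colon X_0\to X_1$ with $w(a_k)=a_0,\ w(a_0)=a_1$. Then $f(a_j)=a_{(j+1)\bmod(k+1)}$ for all $j$, $f$ is well defined and continuous, and $f^{k+1}|_{X_0}=u\circ h_{k-1}\circ\dots\circ h_1\circ w$ is LEL; writing $w=\varphi\circ\psi$ this composition has the form $\Phi\circ\psi$ with $\psi\colon X_0\to\III$, so by Proposition~\ref{P:tentLikeIsExact} it is exactly Devaney chaotic. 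Hence $f$ is Devaney chaotic with $h(f)>0$, while $f^{k+1}$ is not transitive, so $f$ is not totally transitive. For the exact map $g$ I keep $h_1,\dots,h_{k-1},u$ but replace $w$ by a LEL surjection $X_0\to X_1\cup X_2$ with the same boundary values $w(a_k)=a_0,\ w(a_0)=a_1$; this adds the edge $X_0\to X_2$, producing the two cycles $X_0\to X_1\to\dots\to X_k\to X_0$ and $X_0\to X_2\to\dots\to X_k\to X_0$ of coprime lengths $k+1$ and $k$, so $M_g$ is primitive.

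The step I expect to be the genuine obstacle is the continuity of $w$ at the two junction points of $X_0$. Indeed, continuity at $a_0$ forces $w(a_0)=h_1(a_0)=a_1$ and continuity at $a_k$ forces $w(a_k)=u(a_k)=a_0$, so I must send \emph{both} marked points of $X_0$ to prescribed distinct points; but Proposition~\ref{T:lipschitzXtoX} pins down the image of only one marked point of $X_0$, because $\Cut_{X_0}(a_k,a_0)$ need not be uncountable (e.g.\ $X_0$ could be a circle, where no point separates $a_k$ from $a_0$). I would get around this by an \emph{adaptive} factorization $w=\varphi\circ\psi$: first choose the LEL surjection $\psi\colon(X_0,d_0,\CCc_0)\to(\III,d_\III,\CCc_\III)$ of Proposition~\ref{T:MAINa}(c) with $\psi(a_k)=0$, and put $s:=\psi(a_0)$, which is positive once $\psi$ is chosen to separate the distinct points $a_0,a_k$; only \emph{after} $s$ is known do I build the LEL surjection $\varphi\colon\III\to X_1$ (respectively onto $X_1\cup X_2$) with $\varphi(0)=a_0$ and $\varphi(s)=a_1$, e.g.\ as $\varphi=\varphi_{X_1,a_0,a_1}\circ\tau$ with a sawtooth $\tau$ satisfying $\tau(0)=0,\ \tau(s)=1$ and slopes large enough to keep $\varphi$ length-expanding. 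Then $w(a_k)=\varphi(0)=a_0$ and $w(a_0)=\varphi(s)=a_1$ as required, and $w$ is LEL as a composition of LEL maps. This is exactly where hypothesis (d) cannot be transferred to $X_0$, and the freedom to choose $\varphi$ after $\psi$ is what replaces the missing cut-point condition.

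The remaining steps parallel Lemma~\ref{T:I(omega-star)}. By Lemma~\ref{L:pwLip} both $f$ and $g$ are Lipschitz with a constant $\tilde L$ depending only on $\varrho$; with $\AAa=\{X_0,\dots,X_k\}$, $\BBb=\{X_1,\dots,X_{k-1}\}$, $L_{X_i}=1$ on $\BBb$ and $L_{X_0}=L_{X_k}=\tilde L$, the ``expensive'' pieces $X_0,X_k$ occur at most about twice in every $k$ consecutive steps of any path, so $\theta_\BBb=O(1/k)$ and Proposition~\ref{P:entropyOfPLipschitz} gives $h(f),h(g)\le c/k$ for an absolute constant $c$. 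Exactness of $g$ follows from Lemma~\ref{L:exactnessOfPLipschitz} applied with $\DDd=\bigcup_i\CCc_i$, which is dense by Proposition~\ref{T:MAINa}(b) and satisfies its hypothesis because $w,u$ are length-expanding (so one full loop multiplies lengths by at least $\varrho^2$, forcing every $C\in\CCc_i$ eventually to cover some $X_j$), together with $M_g$ primitive and the fact that $A\to B$ implies $g(A)\supseteq B$; in particular $h(g)>0$. Finally I obtain dense periodicity exactly as in the $k$-star case: enlarge $X$ to $\tilde X=X\cup A$ by a free arc $A$ attached at $a_0$, extend $g$ through $A$ via the factorization $w=\varphi\circ\psi$ to an exact map $\tilde g$, and apply \cite[Theorem~1.1]{AKLS} since $\tilde X$ now contains a disconnecting free arc; the periodic points of $\tilde g$ lying in $X$ are dense and are periodic for $g$.
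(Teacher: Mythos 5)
Your proposal follows the paper's skeleton quite closely (the glued convex metric, the cyclic scheme with isometries on $X_1,\dots,X_{k-1}$ and LEL maps on $X_k$ and $X_0$, the entropy estimate via Proposition~\ref{P:entropyOfPLipschitz} with $\BBb=\{X_1,\dots,X_{k-1}\}$, exactness via Lemma~\ref{L:exactnessOfPLipschitz}), and you correctly isolate the real obstacle: continuity forces $w(a_k)=a_0$ and $w(a_0)=a_1$, while $\Cut_{X_0}(a_k,a_0)$ may be countable. But your ``adaptive factorization'' does not close this gap. First, nothing guarantees $s=\psi(a_0)>0$: Proposition~\ref{T:MAINa}(c) delivers a specific map $\psi=f_l\circ\psi_{X_0,a_k,a_0}$ with $\psi(a_k)=0$, and the only clause controlling the value at the \emph{second} marked point is (d), which is available precisely when $\Cut_{X_0}(a_k,a_0)$ is uncountable --- the hypothesis you lack. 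The cited propositions give no freedom to ``choose $\psi$ to separate $a_0$ from $a_k$''; and if $\psi(a_0)=\psi(a_k)=0$, then \emph{every} map of the form $\varphi\circ\psi$ sends $a_k$ and $a_0$ to the same point, so the two required boundary values $a_0\ne a_1$ cannot both be achieved: the construction fails outright, not just quantitatively. Second, even when $s>0$, your sawtooth $\tau$ must rise from $0$ to $1$ on $[0,s]$, so $\Lip(\tau)\ge 1/s$, and the only bound you get on $L_{X_0}=\Lip(w)$ is of order $1/s$, which is not uniform in $X_0$; Proposition~\ref{P:entropyOfPLipschitz} then yields only $h(g)\lesssim(1/k)\log(1/s)$, so you do not obtain one constant $c$ valid for all $X$ and $k$, which is what the lemma asserts (and your claim that $f,g$ are Lipschitz with constant depending only on $\varrho$ is false for this $w$). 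The paper's solution is different and avoids both problems: if $\Cut_{X_0}(a_k,a_0)$ is countable, it first absorbs the chain piece $X_k$ into the junction piece and replaces $k$ by $k-1$ (falling back on Lemma~\ref{P:exactLipschitz} when $k-1=1$); the enlarged junction piece satisfies $\Cut_{X_0\cup X_k}(a_{k-1},a_0)\supseteq\Cut_{X_k}(a_{k-1},a_k)$, which is uncountable by (c) and (d), and then Proposition~\ref{T:lipschitzXtoX}(b) prescribes \emph{both} endpoint images with Lipschitz constants depending only on $\varrho$.

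The dense-periodicity step is also broken. Attach $A$ at $a_0$: continuity of $\tilde g$ at $a_k$ forces the endpoint $0$ of $A$ to be the attaching point $a_0$, and then at $a_0$ the three definitions of $\tilde g$ disagree --- $h_1(a_0)=a_1$ from the $X_1$ side, $\psi(a_0)\in A$ from the $X_0$ side, and $\varphi(0)=a_0$ from the $A$ side --- so $\tilde g$ is not even continuous. The rerouting trick of Lemma~\ref{T:I(omega-star)} works only because there \emph{all} pieces meet at a single point which every map involved fixes; here $X_0$ has two distinct junctions. This is exactly why the paper first collapses $P=\{a_0,\dots,a_k\}$ to a point, passes to the factor map $g'$ on $X'=X/\DDd$ (which has the generalized-star structure, all junctions now being one fixed point), attaches the arc and applies \cite[Theorem~1.1]{AKLS} there, and finally lifts periodic points back, using that every periodic point of $g'$ other than $\pi(P)$ has a unique $\pi$-preimage. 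Without this quotient step, or some substitute for it, your final argument does not go through.
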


See \FigureFreeArc{} for an illustration of a continuum satisfying the assumptions
of the lemma.

\begin{figure}[ht!]
 \includegraphics{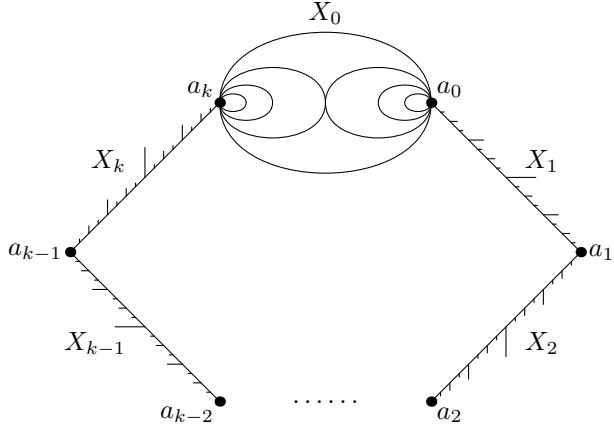}
 \caption{A continuum from Lemma~\ref{T:I(free-arc)}}
 \label{Fig:freeArc}
\end{figure}

\begin{proof}
We may assume that
$\Cut_{X_0}(a_k,a_0)$ is uncountable, for if not,
we replace $k$ by $k'=k-1$ and $X_{k-1}$ by $X'_{k-1}=X_{k-1}\cup X_k$
(if $k'=1$ we simply apply Lemma~\ref{P:exactLipschitz}).

Consider the metrics $d_i=d_{X_i,a_{i-1},a_i}$ for $i=0,\dots,k$
(with $a_{-1}:=a_k$); in view of (c) we may assume that
$f_i:(X_i,d_i)\to (X_{i+1},d_{i+1})$ are isometries for $1\le i<k$.
Let $d$ be the convex metric on $X$ defined by (\ref{EQ:convexMetricOnUnion}).
Since $\bigcup_{i\ne j} X_i\cap X_j = \{a_i:\ i=0,\dots,k\}$,
$1>d_i(a_{i-1},a_i)>1/2$
and $X_i\cap X_j$ are empty or singletons for $i\ne j$,
Lemma~\ref{L:convexMetricOnUnion}
gives that $d|_{X_i\times X_i}=d_i$ for every $i$.

Let $f_k:X_k\to X_0$, $f_0:X_0\to X_1$ and $g_0:X_0\to (X_1\cup X_2)$
be maps from Proposition~\ref{T:lipschitzXtoX} with ($\varrho>2$)
such that $f_0(a_k)=g_0(a_k)=a_0$, $f_0(a_0)=g_0(a_0)=a_1$,
$f_k(a_{k-1})=a_k$ and $f_k(a_k)=a_0$. Define
$f,g:X\to X$ by
$$
  f(x)=\begin{cases}
   f_0(x)  &\text{if } x\in X_0;
   \\
   h_i(x)  &\text{if } x\in X_i,\ 1\le i<k;
   \\
   f_k(x)  &\text{if } x\in X_k;
  \end{cases}
\qquad
  g(x)=\begin{cases}
   g_0(x)  &\text{if } x\in X_0;
   \\
   h_i(x)  &\text{if } x\in X_i,\ 1\le i<k;
   \\
   f_k(x)  &\text{if } x\in X_k.
  \end{cases}
$$
Analogously as in the proof of Lemma~\ref{T:I(omega-star)}
we can show that $f$ is (not totally) Devaney chaotic,
$g$ is exact and the entropies
of $f,g$ are positive and bounded from above by $c/k$, where
$c$ is an absolute constant. What is left is to prove that $g$ has dense periodic points.

To this end put $P=\{a_i:\ i=0,\dots,k\}$ and realize that $g(P)=P$.
Let $\DDd$ be the decomposition of $X$ into $P$ and singletons $\{x\}$, $x\not\in P$.
Let $X'=X/\DDd$ (i.e.~we collapse $P$ into a point),
$\pi:X\to X'$ be the natural projection
and $\{a'\}=\pi(P)$, $X_i'=\pi(X_i)$.
Then ${X'}$ is a compact metric space since
the decomposition $\DDd$ is upper semicontinuous.
The map $g:X\to X$ induces ${g'}:{X'}\to {X'}$
which fixes ${a'}$ and, being a factor of $g$,
is exact. Notice that $g'|_{X_0'}:X_0'\to X_1'\cup X_2'$
can be written as the composition $\varphi\circ \psi$
with continuous $\psi:X_0'\to\III$ and $\varphi:\III\to X_1'\cup X_2'$.
So, analogously as in the proof of Lemma~\ref{T:I(omega-star)},
$g'$ has dense periodic points.
Hence also $g$ has dense periodic points, since whenever
$x'\ne a'$ is a periodic point of $g'$, then its only $\pi$-preimage $x$
is a periodic point of $g$.
\end{proof}

Finally, we are ready to prove our main result.

\begin{theoremMain}
\theoremMainText{}
\end{theoremMain}

\begin{proof}
Note that if $X$ contains a non-disconnecting free arc $A$
then, for every $k\ge 2$, $X$ can be written as in Lemma~\ref{T:I(free-arc)}
with $X_1,\dots,X_k$ being subarcs of $A$.
So the theorem immediately follows from Corollary~\ref{C:IT(X)_is_finite}
and Lemmas~\ref{T:I(omega-star)}, \ref{T:I(free-arc)}.
\end{proof}

\begin{remark}\label{R:generalizations}
Lemma~\ref{T:I(free-arc)}
gives $\IED(X)=0$ also for some
continua containing no non-disconnecting free arc.
For example if, for some $m\in\{3,4,\dots,\aleph_0\}$,
$X$ contains the universal dendrite $D_m$ of order $m$ with
two point boundary and with connected $X\setminus \interior{D_m}$,
we again have $\IED(X)=0$.
(Recall that $D_m$ is the topologically unique dendrite
such that the branch points of it are dense and all have the order $m$.)
\end{remark}



\begin{thebibliography}{99}

\bibitem{ABLM}
 L. Alsed\`a, S. Baldwin, J. Llibre and M. Misiurewicz,
 \textit{Entropy of transitive tree maps},
 Topology \textbf{36}~(1997), no.~2, 519--532.

\bibitem{AKLS}
 L. Alsed\`a, S. Kolyada, J. Llibre and \mL{}. Snoha,
 \textit{Entropy and periodic points for transitive maps},
 Trans. Amer. Math. Soc. \textbf{351}~(1999), no.~4, 1551--1573.

\bibitem{ALM}
Ll. Alsed\`a, J. Llibre and M. Misiurewicz,
\textit{Combinatorial Dynamics and Entropy in Dimension One},
Second Edition, Advanced Series in Nonlinear Dynamics 5,
World Scientific, Singapore, 2000.


\bibitem{ARR}
 L. Alsed\`a, M. A. del R\'io and J. A. Rodr\'iguez,
 \textit{A splitting theorem for transitive maps},
 J. Math. Anal. Appl. \textbf{232}~(1999), no.~2, 359--375.

\bibitem{Bal01}
 S. Baldwin,
 \textit{Entropy estimates for transitive maps on trees},
 Topology \textbf{40}~(2001), no.~3, 551--569.

\bibitem{BS}
 F. Balibrea and \mL{}. Snoha,
 \textit{Topological entropy of Devaney chaotic maps},
 Topology Appl. \textbf{133}~(2003), no.~3, 225--239.

\bibitem{BingPartSet}
 R. H. Bing,
 \textit{Partitioning a set},
 Bull. Amer. Math. Soc.~\textbf{55}~(1949), 1101--1110.

\bibitem{Blo82}
 A. Blokh,
 \textit{On sensitive mappings of the interval},
 Russian Math. Surveys \textbf{37}~(1982), 203--204.

\bibitem {Blo84}
 A. Blokh,
 \textit{On transitive mappings of one-dimensional branched manifolds. (Russian)},
  Differential-difference equations and problems of mathematical physics (Russian),
  3--9, 131, Akad. Nauk Ukrain. SSR, Inst. Mat., Kiev, 1984.

\bibitem{Blo87}
 A. Blokh,
 \textit{On the connection between entropy and transitivity for
  one-dimensional mappings},
 Russ. Math. Surv. \textbf{42}~(1987), no.~5, 165--166.


\bibitem{Bowen71}
 R. Bowen,
 \textit{Entropy for group endomorphisms and homogeneous spaces},
 Trans. Amer. Math. Soc. \textbf{153}~1971, 401--414.

\bibitem{BNT}
 R. D. Buskirk, J. Nikiel and E. D. Tymchatyn,
 \textit{Totally regular curves as inverse limits},
 Houston J. Math. \textbf{18}~(1992), no.~3, 319--327.

\bibitem{Dinaburg}
 E. I. Dinaburg,
 \textit{A connection between various entropy characterizations of dynamical systems},
 (Russian) Izv. Akad. Nauk SSSR Ser. Mat. \textbf{35}~(1971), 324--366.

\bibitem{DSS}
 M. Dirb\'ak, \mL{}. Snoha and V. \v Spitalsk\'y,
 \textit{Dynamical consequences of a free interval: minimality, transitivity, mixing and
 topological entropy}, {\tt arXiv:1112.1191v1 [math.DS]}~(2011).

\bibitem{Fal}
 K. J. Falconer,
 \textit{The geometry of fractal sets},
 Cambridge University Press, Cambridge, 1986.

\bibitem{Federer}
 H. Federer,
 \emph{Geometric measure theory},
 Springer-Verlag New York Inc., New York 1969.

\bibitem{Fre92}
 D. H. Fremlin,
 \textit{Spaces of finite length},
 Proc. London Math. Soc.~(3) \textbf{64}~(1992), 449--486.


\bibitem{HKO11}
G. Hara\'nczyk, D. Kwietniak  and P. Oprocha,
\textit{Topological structure and entropy of mixing graph maps},
{\tt arXiv:1111.0566v1 [math.DS]}~(2011).


\bibitem{Katok}
 A. Katok and B. Hasselblatt,
 \textit{Introduction to the modern theory of dynamical systems},
 Cambridge University Press, Cambridge, 1995.

\bibitem{KM11}
  S. Kolyada and M. Matviichuk,
  \textit{On extensions of transitive maps},
  Discrete Contin. Dyn. Syst. \textbf{30}~(2011), no.~3, 767--777.

\bibitem{Kur2}
 K. Kuratowski,
 \textit{Topology, vol. 2},
 Academic Press and PWN, Warszawa, 1968.

\bibitem{KM}
 D. Kwietniak and M. Misiurewicz,
 \textit{Exact Devaney chaos and entropy},
 Qual. Theory Dyn. Syst. \textbf{6}~(2005), no.~1, 169--179.

\bibitem{Macias}
  S. Mac\'ias,
  \emph{Topics on continua},
  Chapman \& Hall/CRC, Boca Raton, FL, 2005.

\bibitem{Mattila}
  P. Mattila,
  \emph{Geometry of sets and measures in Euclidean spaces. Fractals and rectifiability},
  Cambridge University Press, Cambridge, 1995.

\bibitem{Nad}
  S. B. Nadler,
  \textit{Continuum theory. An introduction},
  Monographs and Textbooks in Pure and Applied Mathematics, 158,
  Marcel Dekker, Inc., New York, 1992.

\bibitem{Nik}
  J. Nikiel,
  \textit{Locally connected curves viewed as inverse limits},
  Fund. Math. \textbf{133}~(1989), no.~2, 125--134.

\bibitem{Ruette}
  S. Ruette,
  \textit{Chaos for continuous interval maps --- a survey of relationship between
  the various sorts of chaos},
  preprint, available at
  \url{http://www.math.u-psud.fr/~ruette/articles/chaos-int.pdf}.

\bibitem{SpA}
 V. \v Spitalsk\'y, 
 \emph{Length-expanding Lipschitz maps on totally regular continua},
 {\tt arXiv:1203.2352v1 [math.DS]}~(2012).


\bibitem{Why42}
  G. T. Whyburn,
  \textit{Analytic topology},
  American Mathematical Society, New York, 1942.


\bibitem{Ye}
 X. Ye,
 \textit{Topological entropy of transitive maps of a tree},
 Ergodic Theory Dynam. Systems \textbf{20}~(2000), no.~1, 289--314.


\end{thebibliography}
\end{document}